\newtheorem{lemma}{Lemma}
\newtheorem{Theorem}[lemma]{Theorem}
\newtheorem{Corollary}[lemma]{Corollary}
\newtheorem{definition}[lemma]{Definition}
\newtheorem{claim}[lemma]{Claim}
\newcommand{\eps}{\varepsilon}
\newcommand{\Rbold}{{\mathbb{R}}}
\newcommand{\E}{\mathbb{E}}
\newcommand{\prob}{\mathbb{P}}
\renewcommand{\phi}{\varphi}
\newcommand{\al}{\alpha}
\newcommand{\lam}{\lambda}
\renewcommand{\r}{r_{\text{max}}}
\newcommand{\rmin}{r_{\text{min}}}
\newcommand{\NGe}{N_G(X,e)}
\newcommand{\NGee}{N_G(X,e,e')}
\newcommand{\rGe}{r_G(X,e)}
\newcommand{\rei}{(\r)^{|E_i|-1}}
\renewcommand{\P}{{\bf P}}
\newcommand{\F}{\mathcal{F}}
\def\ind{{\rm 1\hspace{-0.90ex}1}}
\begin{document}
\author{Shankar Bhamidi\thanks{Mathematics Department, University of British Columbia. Research supported by N.S.F. Grant DMS 0704159 and by PIMS and NSERC Canada}\qquad
Guy Bresler\thanks{Department of Electrical Engineering and Computer Sciences, University of California, Berkeley. Research supported by a Vodafone
Fellowship and an NSF Graduate Fellowship}\qquad Allan Sly\thanks{Department of Statistics, University of California, Berkeley. Research supported by N.S.F. Grants DMS 0528488 and DMS 0548249 and by DOD ONR Grant
N0014-07-1-05-06}
}
       
       \title{Mixing time of exponential random graphs}
\date{}
\maketitle
\footnotetext{An extended abstract of this work appeared in FOCS 2008 \cite{BBS08}.}

\begin{abstract}
A variety of random graph models have been developed in recent years to study a range of problems on networks,  driven by the wide availability of data from many social, telecommunication, biochemical and other networks.  A key model, extensively used in the sociology literature, is the exponential random graph model.  This model seeks to incorporate in random graphs the notion of reciprocity, that is, the larger than expected number of triangles and other small subgraphs.  Sampling from these distributions is crucial for parameter estimation  hypothesis testing, and more generally for understanding basic features of the network model itself.  In practice sampling is typically carried out using Markov chain Monte Carlo, in particular either the Glauber dynamics or the Metropolis-Hasting procedure.

In this paper we characterize the high and low temperature regimes of the exponential random graph model. We establish that in the high temperature regime the mixing time of the Glauber dynamics is $\Theta(n^2 \log n)$, where $n$ is the number of vertices in the graph;  in contrast, we show that in the low temperature regime the mixing is exponentially slow for any local Markov chain.  Our results, moreover, give a rigorous basis for criticisms made of such models.   In the high temperature regime, where sampling with MCMC is possible, we show that any finite collection of edges are asymptotically independent; thus, the model does not possess the desired reciprocity property, and is not appreciably different from the Erd\H{o}s-R\'enyi random graph.

\end{abstract}


{\bf Key words.}
mixing times, exponential random graphs, path coupling

{\bf MSC2000 subject classification.}
60C05, 05C80, 90B15.


\section{Introduction}

In the recent past there has been explosion in the study of real-world networks including rail and road networks, biochemical networks, data communication networks such as the Internet, and social networks. This has resulted in a concerted interdisciplinary effort to develop new mathematical network models to explain characteristics of observed real world networks, such as power law degree behavior, small world properties, and a high degree of clustering (see for example \cite{newman-survey,barabasi-survey,durrett-book} and the citations therein). 

Clustering (or reciprocity) refers to the prevalence of triangles in a graph. This phenomenon is most easily motivated in social networks, where nodes represent people and edges represent relationship. The basic idea is that if two individuals share a common friend, then they are more likely than otherwise to themselves be friends. However, most of the popular modern network models, such as the preferential attachment and the configuration models, are essentially tree-like and thus do not model the reciprocity observed in real social networks. 

One network model that attempts to incorporate reciprocity is the exponential random graph model. This model is especially popular in the sociology community. The model follows the statistical mechanics approach of defining a Hamiltonian to weight the probability measure on the space of graphs,  assigning higher mass to graphs with ``desirable''  properties. While deferring the general definition of the model to Section~\ref{sec:def}, let us give a brief example. Fix parametric constants $h,\beta >0$ and for every graph $X$ on $n$ labeled vertices with $E(X)$ edges and $T(X)$ triangles,  define the Hamiltonian of the graph as 
\[H(X) = h E(X) + \beta T(X)\,.\]
A probability measure on the space of graphs may then be defined as 
\begin{equation}\label{e:gibbsDistn_Intro}p_n(X)  = \frac{e^{H(X)}}{Z}\,,\end{equation}
where $Z$ is the normalizing constant often called the partition function. More generally, one can consider Hamiltonians in graphs which include counts $T_i(X)$ of different small subgraphs $G_i$,
\[H(X) =  \sum_i \beta_i T_i(X)\,.\]

Social scientists use these models in several ways. The class of distributions \eqref{e:gibbsDistn_Intro} is an exponential family, which allows for statistical inference of the parameters using the subgraph counts (which are sufficient statistics for the parameters involved). Sociologists carry out tests of significance, hoping to understand how prescription of local quantities such as the typical number of small subgraphs in the network affects more global macroscopic properties.   
Parameter estimation can be carried out either by maximum likelihood or, as is more commonly done, by simply equating the subgraph counts.  Both procedures in general require sampling, in the case of maximum likelihood to estimate the normalizing constants.  Thus, efficient sampling techniques are key to statistical inference on such models.  At a more fundamental level, sociologists are interested in the the question of how localized phenomena involving a small number people  determine the large scale structure of the networks  \cite{snijders2004}.  Sampling exponential random graphs and observing their large scale properties is one way this can be realized.  Sampling is almost always carried out using local MCMC algorithms, in particular the Glauber dynamics or Metropolis-Hasting.  These are reversible ergodic Markov chains, which eventually converge to the stationary distribution $p_n(X)$.  However, our results show that the time to convergence can vary enormously depending on the choice of parameters.

{\bf Our results:}
It is surprising that in spite of the practical importance of sampling from exponential random graph distributions, there has been no mathematically rigorous study of the mixing time of any of the various Markov chain algorithms in this context. The goal of this paper is to fill this gap. We focus attention to the  Glauber dynamics, one of the most popular Markov chains.  We provide the first rigorous analysis of the mixing time of the Glauber dynamics for the above stationary distribution and do so in a very general setup. In the process we give a rigorous definition of the ``high temperature" phase, where the Gibbs distribution is unimodal and the Glauber dynamics converges quickly to the stationary distribution,  and the ``low temperature'' phase, where the Gibbs distribution is multimodal and the Glauber dynamics takes an exponentially long time to converge to the stationary distribution.  While a complete understanding of the Gibbs distribution in the low temperature phase remains out of reach (see, however, the important work of Sourav Chatterjee in the case of triangles \cite{sourav-rnd}), we can nevertheless show that the distribution has poor conductance, thereby establishing exponentially slow mixing for any local Markov chain with the specified stationary distribution. 

Our results, moreover, give a rigorous basis for criticisms made of such models.   In the high temperature regime, where sampling with MCMC is possible, we show that any finite collection of edges are asymptotically independent. Also, we show that with exponentially high probability a sampled graph is \emph{weakly pseudorandom}, meaning that it satisfies a number of equivalent properties (such as high edge expansion) shared by Erd\H{o}s-R\'enyi random graphs. Thus, the model does not possess the desired reciprocity property, and is not appreciably different from the Erd\H{o}s-R\'enyi random graph. 

{\bf Relevant literature:}
There is a large body of literature, especially in the social networking community, on exponential random graph models. We shall briefly mention just some of the relevant literature and how it relates to our results (see \cite{newman-survey,snijders2004,wasserman-survey} and the references therein for more background). The pioneering article in this  area by Frank and Strauss \cite{frank-strauss} introduced the concept of Markov graphs. Markov graphs are a special case of exponential random graphs with only  situation where the subgraphs are stars or triangles. Extending the methodology of \cite{frank-strauss}, Wasserman and Pattison \cite{wasserman-pattison} introduced general subgraph counts. However, from the outset a number of researchers noted problems at the empirical level for their Markov chain algorithms, depending on parameter values. See \cite{snijders2004} for a relevant discussion of  empirical findings as well as several new specifications of the model to circumvent such issues. 


On the theoretical side, Sourav Chatterjee \cite{sourav-rnd}, in his recent work characterizing the large deviation properties of Erd\H os-R\'enyi random graphs, developed  mathematical techniques that can be used to study the distribution these random graphs.  At the statistical physics (non-rigorous) level Mark Newman and his co-authors have studied the case where the subgraphs are triangles and 2-stars. In this setting, using mean-field approximations, they predicted a phase transition between a high-symmety phase, with graphs exhibiting only a mild amount of reciprocity, and a degenerate symmetry-broken phase with either high or low edge density (see \cite{newman2star} and \cite{newmancluster}).

\subsection{Definitions and Notation}
\label{sec:def}
This section contains a precise mathematical definition of the model and the Markov chain methodology used in this paper. 
We work  on the space $\mathcal{G}_n$ of all  graphs on $n$ vertices with vertex set $[n]:= \{1,2,\ldots, n\}$. We shall use $X = (x_e)$ to denote a graph from $\mathcal{G}_n$ where for every edge $e = (i,j)$,  $x_e$ is $1$ if the edge between vertex $i$ and $j$ is present and $0$ otherwise. For simplicity, we shall often write $X(e)$ for $x_e$. The exponential random graph model is defined in terms of the number of subgraphs $G$ (e.g., triangles or edges) contained in $X$ . It will be convenient to define these subgraph counts as follows.  Fix a graph $G$ on the vertex set $1,2,\ldots m$. Let $[n]^m$ denote the set of all $m$ tuples of distinct elements:
\[[n]^m:= \{ (v_1,\ldots, v_m): v_i\in [n], v_1\neq v_2\cdots \neq \ldots, v_m\}\,.\] 
We shall denote such an $m$ tuple of distinct vertices by ${\bf v}_m$. In a graph $X$, for any $m$ distinct vertices ${\bf v}_m$,  let $H_X({\bf v}_m) $ denote the subgraph of $X$ induced by ${\bf v}_m$. Say that $H_X({\bf v}_m)$ contains $G$, denoted by $H_X({\bf v}_m) \cong G$, if whenever the edge $(i,j)$ is present in $G$, then the edge $(v_i,v_j)$ is present in $H_X({\bf v}_m)$ for all $\{1\leq i\neq j \leq m\}$. 
For a configuration $X\in \mathcal{G}_n$ and a fixed graph $G$ define the count
\begin{equation}
N_{G}(X) = \sum_{{\bf v}_m \in [n]^m} \ind\{H_X({\bf v}_m) \cong G \}.
\end{equation}
This definition is equivalent to the usual exponential random graph model up to adjustments in the constants $\beta$ by multiplicative factors.  It counts subgraphs multiple times; for instance a triangle will be counted 6 times and in general a graph $G$ with $k$ automorphisms will be counted $k$ times.  By dividing the parameters $\beta_i$ by this multiplicative factor we reduce to the usual definition.

In our proof we shall also need more advanced versions of the above counts which we define now. Fix an edge $e = (a,b) \in X$. The subgraph count of $G$ in $X \cup \{e \}$  containing edge $e$ is defined as:
\[N_G(X,e) = \sum_{{\bf v}_m \in [n]^m, {\bf v}_m \ni a,b} \ind\{H_{X \cup \{e\}}({\bf v}_m) \cong G\}\,.\]
Similarly, for two edges $e =(a,b)$ and $e^\prime = (c,d)$ define the subgraph counts of $G$ in $X \cup \{e , e^\prime\} $ and containing edges $e, e^\prime$ by
\[N_G(X, e, e^\prime) =  \sum_{{\bf v}_m \in [n]^m, {\bf v}_m \ni a,b, c, d} \ind\{H_{X \cup \{e, e^\prime\} }({\bf v}_m) \cong G\}\,. \]  

{\bf Gibbs measure: } We now define the probability measure on the space $\mathcal{G}_n$. 
Fix $k\geq 1$ and fix graphs $G_1, G_2 \ldots, G_s$ with $G_i$ a graph on $|V_i|$ labelled vertices, with $|V_i|\leq L$ and with edge set $E_i$.  For simplicity we shall think of $G_i$ as a graph on the vertex set $1,2,\ldots |V_i|$.  By convention we shall always let  $G_1$ denote the edge graph consisting of the graph with vertex set  $1,2$ and edge set $(1,2)$. In this notation, for any configuration $X\in \mathcal{G}_n$ the quantity $N_{G_1}(X) $ will be twice the number of edges in $X$.  With this convention, fix constants $\beta_1, \beta_2, \ldots \beta_s$ with $\beta_i>0$ for $i\geq 2$ and $\beta_1\in \Rbold$. The exponential random graph probability measure is defined as follows. 

\begin{definition}
For  $G_1,\ldots G_s$ and constants ${\mathbf{\beta}} = (\beta_1, \ldots, \beta_s)$ as above, the Gibbs measure on the space $\mathcal{G}_n$ is defined as the probability measure
\begin{equation}
\label{eqn:gibbs}
p_n(X)  = \frac{1}{Z_n(\mathbf{\beta})} \exp\left(\sum_1^s \beta_i \frac{N_{G_i}(X)}{n^{|V_i| -2}}\right) \hspace{6mm} X\in \mathcal{G}_n\,.
\end{equation}
\end{definition}
Here $Z_n(\mathbf{\beta})$ is the normalizing factor and is often called the partition function. For simplicity we have suppressed the dependence of the measure on the vector $\mathbf{\beta}$. Also, note the normalization of the subgraph count of $G_i$ by the factor $n^{|V_i|-2}$, so that the contribution of each factor scales properly and is of order $n^2$ in the large $n$ limit. Setting $\beta_i\geq 0$ for $i\geq 2$ makes the Gibbs measure a monotone (also ferromagnetic) system which will be important for our proof.  The term $\beta_1$ does not affect the interaction between edges and plays the role of an external field in this model; adjusting $\beta_1$ makes it more or less likely for edges to be included. 

The term in the exponent is often called the {\it Hamiltonian} and we shall denote it by:
\[H(X) = \sum_1^s \beta_i \frac{N_{G_i}(X)}{n^{|V_i| -2}} \,.\]
Note that $H(X): \{0,1\}^{{n \choose 2}} \to \Rbold^+$ is a function of ${n \choose 2}$ Boolean variables $X(e)$ and has an elementary Fourier decomposition in terms of the basis functions $\prod_{e\in S} X(e)$, where $S$ runs over all possible subsets of edges. Thus, with respect to any fixed edge $e$, we can decompose the above Hamiltonian as
\[H(X) = A_e(X) + B_e(X)\,,\]
where $A_e$ consists of all terms dependent on edge $e$ and $B_e(X)$ denotes all terms independent of edge $e$. Let $X_{e+}$ denote the configuration of edges which coincides with $X$ for edges $e\neq f$ and has $X_{e+}(e) = 1$. The partial derivative with respect to the edge $e$ of the Hamiltonian $H$, evaluated at a configuration $X$, is defined by the formula
\[\partial_e H(X) = A_e(X_{e+})\,. \]
The higher derivatives $\partial_e \partial_{e^\prime}$ for $e\neq e^\prime$ are defined similarly by iterating the above definition. 

{\bf Glauber dynamics and local chains: } The Glauber dynamics is an ergodic reversible Markov chain with stationary distribution $p_n(\cdot)$, where at each stage exactly one edge is updated.  It is defined as follows:
\begin{definition}\label{Glauber}
Given the Gibbs measure above,  the corresponding Glauber dynamics is a discrete time ergodic Markov chain on $\mathcal{G}_n$. Given the current state $X$, the next state $X^\prime$ is obtained by choosing an edge $e$ uniformly at random and letting $X^\prime= X_{e+}$ with probability proportional $p_n(X_{e+})$ and  $X^\prime(e) = X_{e-}$ with probability proportional to $p_n(X_{e-})$. Here $X_{e+}$ is the graph which coincides with $X$ for all edges other than $e$ and  $X_{e+}(e) = 1$.  Similarly  $X_{e-}$ is the graph which coincides with $X$ for all edges other than $e$ and  $X_{e-}(e) = 0$. 
\end{definition}

There are various other chains that can also be used to simulate the above Gibbs measure. Call a chain on $\mathcal{G}_n$ {\it local} if at most $o(n)$ edges are updated in each step.   The transition rates for the Glauber dynamics satisfy the following relation:
\begin{lemma}\label{l:transitionProbabilities}
Given that we chose edge $e$ to update,  the probability of the transition $X \hookrightarrow X_{e+}$  is 
$ \frac{\exp(\partial_e H(X))}{1+\exp(\partial_e H(X))} $
and the probability of the transition $X \hookrightarrow X_{e-}$ is
$\frac{1}{1+\exp(\partial_e H(X))} $
\end{lemma}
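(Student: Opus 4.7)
The plan is to prove the formulas by directly reducing the Glauber transition probability, which is a ratio of two values of $p_n$, to a logistic function of the single quantity $\partial_e H(X)$. The computation is short, and the only non-trivial input is understanding why most of the Hamiltonian drops out of the ratio.

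First, I would invoke the additive decomposition $H(X) = A_e(X) + B_e(X)$ stated in the excerpt, where $A_e$ collects those Fourier monomials $\prod_{f\in S} X(f)$ with $e\in S$ and $B_e$ collects those with $e\notin S$. Because $A_e(Y)$ is a linear combination of monomials each containing $Y(e)$ as a factor, the key algebraic observation is that $A_e(X_{e-})=0$, while by definition $A_e(X_{e+}) = \partial_e H(X)$. Also, since $B_e$ does not depend on the value of $X(e)$, we have $B_e(X_{e+}) = B_e(X_{e-}) = B_e(X)$.

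Next, by Definition~\ref{Glauber}, given that edge $e$ is chosen, the chain moves to $X_{e+}$ with probability $p_n(X_{e+})/(p_n(X_{e+})+p_n(X_{e-}))$ and to $X_{e-}$ with the complementary probability. Writing $p_n(Y) = Z_n(\beta)^{-1}\exp(A_e(Y)+B_e(Y))$, the partition function and the $B_e(X)$ factors cancel between numerator and denominator, so the transition probability reduces to
\[
\frac{\exp(A_e(X_{e+}))}{\exp(A_e(X_{e+}))+\exp(A_e(X_{e-}))} = \frac{\exp(\partial_e H(X))}{1+\exp(\partial_e H(X))},
\]
and similarly the probability of $X\hookrightarrow X_{e-}$ is $1/(1+\exp(\partial_e H(X)))$.

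There is essentially no obstacle: the only point requiring care is the claim $A_e(X_{e-})=0$, which I would justify in one line by citing the Fourier decomposition of $H$ set up just before the statement of the lemma. Everything else is bookkeeping.
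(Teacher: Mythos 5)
Your proof is correct and follows exactly the route the paper intends: the paper omits a proof of this lemma, but the decomposition $H=A_e+B_e$ and the definition $\partial_e H(X)=A_e(X_{e+})$ are set up in the paragraph immediately preceding it precisely so that the ratio $p_n(X_{e+})/(p_n(X_{e+})+p_n(X_{e-}))$ collapses as you describe, with the one substantive point being $A_e(X_{e-})=0$ because every monomial in $A_e$ contains the factor $X(e)$.
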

{\bf Mixing time:} We will be interested in the time it takes for the Glauber dynamics to get close
to the stationary distribution given by the Gibbs measure (\ref{eqn:gibbs}).
The {\em mixing time } $\tau_{mix}$ of a Markov chain is defined as the
number of steps needed in order to guarantee that the chain,
starting from an arbitrary state, is within total variation distance
$e^{-1}$ from the stationary distribution.

We mention the following fundamental result which draws a connection between  total variation distance and coupling. It allows us to conclude that if we can couple two versions of the Markov chains started from different states quickly, the chain mixes quickly. The following lemma is well known, see e.g. \cite{aldous-fill}. 

\begin{lemma}[Mixing time Lemma]
\label{lemma:mixing}
For a Markov chain $X$,  suppose there exist two coupled copies, $Y$ and $Z$, such that each is marginally distributed as $X$ and
\[
\max_{y,z} [\prob(Y_{t_0} \neq Z_{t_0}|Y_0=y,Z_0=z) \leq (2e)^{-1}.
\]
Then the the mixing time of $X$ satisfies $\tau_{mix} \leq t_0$.
\end{lemma}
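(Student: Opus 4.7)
The plan is to reduce the statement to the classical \emph{coupling inequality} combined with stationarity of the Gibbs measure $p_n$. The coupling inequality, which I would first recall, says that for any two random variables $U$ and $V$ defined on a common probability space, $\| \mathcal{L}(U) - \mathcal{L}(V) \|_{TV} \leq \prob(U \neq V)$. This is immediate from the definition $\|\mathcal{L}(U) - \mathcal{L}(V)\|_{TV} = \max_A |\prob(U \in A) - \prob(V \in A)|$ upon noting that on the event $\{U = V\}$ the indicators $\ind\{U \in A\}$ and $\ind\{V \in A\}$ coincide.

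Next, for an arbitrary initial state $y$, I would invoke the coupling promised by the hypothesis, this time with $Z_0$ drawn from the stationary distribution $p_n$. Because $p_n$ is invariant under the chain, $\mathcal{L}(Z_{t_0}) = p_n$. Writing $P^{t_0}(y,\cdot)$ for the distribution of $Y_{t_0}$, the coupling inequality then yields
\begin{align*}
\| P^{t_0}(y,\cdot) - p_n \|_{TV}
&= \| \mathcal{L}(Y_{t_0}) - \mathcal{L}(Z_{t_0}) \|_{TV}
\leq \prob(Y_{t_0} \neq Z_{t_0}) \\
&= \sum_{z} p_n(z)\, \prob\bigl(Y_{t_0} \neq Z_{t_0} \mid Y_0 = y,\, Z_0 = z\bigr) \\
&\leq \max_{y',z'} \prob\bigl(Y_{t_0} \neq Z_{t_0} \mid Y_0 = y',\, Z_0 = z'\bigr)
\ \leq\ (2e)^{-1}.
\end{align*}
Taking a further maximum over $y$ gives the uniform bound $\max_y \| P^{t_0}(y,\cdot) - p_n \|_{TV} \leq (2e)^{-1} < e^{-1}$, which by the definition of mixing time is exactly the statement $\tau_{mix} \leq t_0$.

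Since this is a textbook fact, there is no serious mathematical obstacle. The only bookkeeping point is that for every pair $(y,z)$ one must actually construct a coupling whose simultaneous marginals are two copies of the chain and whose non-coupling probability obeys the hypothesised uniform bound; but this is precisely what the assumption supplies. A streamlined variant avoids sampling $Z_0$ from $p_n$ altogether by working with $\bar d(t) := \max_{y,z} \| P^t(y,\cdot) - P^t(z,\cdot) \|_{TV}$, using $d(t) \leq \bar d(t)$ via $p_n = \sum_z p_n(z) P^t(z,\cdot)$ together with convexity of total variation distance, and then applying the coupling inequality to conclude $\bar d(t_0) \leq (2e)^{-1}$ directly. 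Either way, the factor $(2e)^{-1}$ in the hypothesis provides a comfortable cushion below the threshold $e^{-1}$ built into the definition of $\tau_{mix}$.
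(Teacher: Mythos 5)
Your argument is correct: the coupling inequality $\|\mathcal{L}(U)-\mathcal{L}(V)\|_{TV}\leq \prob(U\neq V)$, applied to the hypothesised coupling with $Z_0$ drawn from the stationary distribution (so that $\mathcal{L}(Z_{t_0})=p_n$ by invariance), gives $\max_y\|P^{t_0}(y,\cdot)-p_n\|_{TV}\leq (2e)^{-1}<e^{-1}$, which is the definition of $\tau_{mix}\leq t_0$ used in the paper. The paper itself supplies no proof of this lemma --- it is stated as well known with a citation to Aldous--Fill --- and your argument is precisely the standard one that reference contains, so there is nothing to compare beyond noting that the constant $(2e)^{-1}$ in the hypothesis is not even needed in full for the paper's $e^{-1}$ threshold.
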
 

Because the exponential random graph model is a monotone system, we can couple the Glauber dynamics so that if $X (0)\leq Y(0)$, then for all $t$, $X (t)\leq Y(t)$. This inequality is a partial ordering meaning that the edge set of $X$ is a subset of the edge set of $Y$.  This is known as the monotone coupling and, by monotonicty, Lemma \ref{lemma:mixing} reduces to bounding the time until chains starting from the empty and complete graphs couple.


With the above definitions of the Gibbs measure, the following functions determine the properties of the mixing time. Define for fixed $\mathbf{\beta} \in \Rbold\times(\Rbold_+)^{s-1}$ the functions
\[
\Psi_{\mathbf{\beta}}(p) = \sum_{i=1}^s 2  \beta_i |E_i| p^{|E_i| - 1}
\]

\[
\varphi_{\mathbf{\beta}}(p) = \frac{\exp(  \Psi(p)  ) } {1 + \exp(  \Psi(p)  ) }\,.
\]

Note that $\Psi_{{\mathbf{\beta}}}$ is a smooth, strictly increasing function on the unit interval. Since $\phi_\beta(0)> 0$ and $\phi_\beta(1)<1$ the equation $\phi_\beta(p)=p$ has at least one solution, denoted by $p^*$.  If this solution is unique and not an inflection point, then $0<\phi_\beta'(p^*)<1$.  The function $\phi(p)$ has the following loose motivation: if $X$ is a graph chosen according to the Erd\H{o}s-R\'enyi distribution $G(n,p)$, then with high probability all edge update probabilities $\frac{\exp(\partial_e H(X))}{1+\exp(\partial_e H(X))} $ are approximately $\phi(p)$.

{\bf Phase identification:  } We now describe the high and low temperature phases of this model. Recall that our parameter space is $\mathcal{B} = \Rbold \times (\Rbold_+)^{s-1}$. We call $p\in [0,1]$ a fixed point if  $\varphi_{\mathbf{\beta}}(p) = p$.  
 
 {\it  High temperature phase: } We say that a $\beta \in \mathcal{B}$ belongs to the high temperature phase if $\phi_\beta(p)=p$ has a unique fixed point $p^*$ which satisfies
 \begin{equation}
 \varphi_{\beta}^\prime(p^*) <1.
 \end{equation}

{\it Low temperature phase: } We say that a $\beta \in \mathcal{B}$ belongs to the low temperature phase if $\phi_\beta(p)=p$ has at least two fixed points $p^*$ which satisfy $ \varphi_{\beta}^\prime(p^*) <1$.

Values of $\beta$ not in either phase are said to be in the critical points.  They occur when one of the fixed points is an inflection point of $\phi_\beta$.  These critical points form an $s-1$ dimensional manifold which is in the intersection of the closure of the high and low temperature phases.  For simplicity, in the proof we shall suppress the dependence of the functions on $\mathbf{\beta}$ and write $\varphi$ for $\varphi_{\mathbf{\beta}}$ and $\Psi$ for $\Psi_{\mathbf{\beta}}$.

\subsection{Results}
The first two results show that the high and low temperature phases determine the mixing time for local Markov chains. 
\begin{Theorem}[High temperature]
\label{theo:high}
 If $\varphi(p)$ is in the high temperature regime then the mixing time of the Glauber dynamics is $\Theta(n^2\log{n})$.
\end{Theorem}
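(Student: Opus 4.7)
The plan is to prove both the matching upper and lower bounds via the monotone coupling of the Glauber dynamics, which the excerpt has already set up: it suffices to bound the expected coupling time of $Y_t$ started from the complete graph and $Z_t$ started from the empty graph. The upper bound splits into a burn-in phase bringing both chains near the equilibrium density $p^*$, followed by a path-coupling phase that contracts the residual disagreement. For the burn-in (length $O(n^2)$), I track the edge density $p_t := N_{G_1}(X_t)/[n(n-1)]$; for a graph $X$ whose subgraph counts $N_{G_i}$ are close to their $p$-typical values, Lemma~\ref{l:transitionProbabilities} and $\partial_e H(X)\approx \Psi(p)$ give the drift
\[
\E[p_{t+1}-p_t\mid X_t]\ \approx\ \frac{1}{\binom{n}{2}}\bigl(\phi(p_t)-p_t\bigr),
\]
so on the time scale $n^2$ the density $p_t$ closely tracks the ODE $\dot p=\phi(p)-p$. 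In the high temperature regime $p^*$ is the unique zero of $\phi(p)-p$, and since $\phi(0)>0$ and $\phi(1)<1$ it is globally attracting for the ODE. Azuma concentration of the martingale part of $p_t$, together with analogous drift/concentration analyses for each $N_{G_i}$, shows that after $T_1=Cn^2$ steps both densities lie in $[p^*-\eta,p^*+\eta]$ and all $N_{G_i}(X_t)$ are within their typical range, with probability $1-o(1)$.

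In Phase 2 (length $O(n^2\log n)$) I apply the Bubley--Dyer path coupling theorem to the monotone coupling. For $W\le W'=W\cup\{f\}$ at Hamming distance $1$ in the good region, updating a uniformly chosen edge $e$ with a common threshold $U\in[0,1]$ kills the disagreement when $e=f$ (probability $1/\binom{n}{2}$) and spreads it with probability at most $\sigma(\partial_e H(W'))-\sigma(\partial_e H(W))\le\sigma'(\xi)\,\partial_f\partial_e H(W)$ by the mean value theorem, where $\sigma(x)=e^x/(1+e^x)$. The key combinatorial identity, obtained by splitting $e$ into vertex-disjoint and vertex-sharing cases relative to $f$ and using $\partial_f N_{G_i}(X)\approx 2|E_i|p^{|E_i|-1}n^{|V_i|-2}$, is
\[
\sum_{e\ne f}\sigma'\bigl(\partial_e H(W)\bigr)\,\partial_f\partial_e H(W)\ \longrightarrow\ \sigma'(\Psi(p^*))\,\Psi'(p^*)\ =\ \phi'(p^*)\ <\ 1
\]
for $W$ typical at density $p^*$. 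Hence the expected Hamming distance contracts by a factor $1-(1-\phi'(p^*)-o(1))/\binom{n}{2}$ per step, and starting from Hamming distance at most $\binom{n}{2}$ along a monotone path yields coupling time $O(n^2\log n)$.

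The main obstacle I anticipate is keeping the chain in the good set of graphs with typical subgraph counts throughout $\sim n^2\log n$ steps, since an atypical excess in even one $N_{G_i}$ can push $\partial_e H$ far from $\Psi(p)$ and destroy the contraction estimate. I would address this by establishing that the good set is essentially absorbing on this time scale: each normalized count $N_{G_i}(X_t)/n^{|V_i|}$ satisfies its own drift equation towards its equilibrium value at density $p^*$, and Azuma-type deviation bounds combined with a union bound over the polynomially many steps control all relevant fluctuations with high probability.

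For the matching lower bound $\Omega(n^2\log n)$, I use the edge count $M_t=N_{G_1}(X_t)/2$ as a distinguishing statistic. Starting from the empty graph, linearizing the Phase 1 drift around $p^*$ gives $p^*\binom{n}{2}-\E[M_t]\gtrsim p^*\binom{n}{2}(1-c/\binom{n}{2})^t$ for some $c>0$ depending on $1-\phi'(p^*)$. Under the stationary measure, $M$ concentrates around $p^*\binom{n}{2}$ with fluctuations $O(n)$ (via FKG and monotonicity, or by a direct second-moment bound at a typical graph). Hence the time-$t$ distribution is at total variation distance $\Omega(1)$ from stationarity unless $p^*\binom{n}{2}(1-c/\binom{n}{2})^t=O(n)$, which forces $t\ge\Omega(n^2\log n)$.
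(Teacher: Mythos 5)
Your overall architecture --- monotone coupling, an $O(n^2)$ burn-in driving both chains into a good set, then Bubley--Dyer path coupling with contraction rate governed by $\varphi'(p^*)<1$ --- is the same as the paper's, and your path-coupling step (mean value theorem on the logistic function, the identity $\sigma'(\Psi(p^*))\Psi'(p^*)=\varphi'(p^*)$) matches Lemma~\ref{l:pathCoupling}. The genuine gap is in the burn-in. The drift of the edge density $p_t$ is \emph{not} a function of $p_t$: the update probability at an edge $e$ is the logistic function of $\partial_e H(X)$, which depends on the per-edge counts $N_{G_i}(X,e)$, and a graph of density $p$ can have these wildly atypical and varying from edge to edge. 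So ``$p_t$ tracks $\dot p=\phi(p)-p$'' presupposes exactly what must be proved. Your fallback of running ``analogous drift analyses for each $N_{G_i}$'' does not close the system either: by Lemma~\ref{l:countingSubgraphsSum}, the drift of $N_{G}(X,e)$ is governed by the counts $N_{G_\al}(X,e)$ of $G$ with one edge deleted, and these graphs are generally not among the Hamiltonian graphs $G_1,\dots,G_s$; moreover it is the per-edge counts, not the global ones, that enter the transition probabilities. The paper's resolution is to control $r_G(X,e)$ simultaneously for \emph{all} graphs $G$ on at most $L$ vertices and all edges $e$, and to use monotonicity of $\phi$ and of $x\mapsto x^{|E|-1}$ to show that the single scalar $\max_{G,e}r_G(X,e)$ has uniformly negative drift whenever it exceeds $p^*+\mu$ (Lemmas~\ref{l:driftCalculation} and~\ref{l:randomWalk}). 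Some such closure device is needed; without it neither the burn-in nor your ``good set is absorbing'' claim goes through. Relatedly, plain Azuma with worst-case increments fails: updating an edge $e'$ sharing a vertex with $e$ changes $N_G(X,e)/n^{|V|-2}$ by $O(n^{-1})$, so over $n^2$ steps Azuma gives deviations of order $1$. The paper's moment-generating-function argument exploits that such updates occur only with probability $O(n^{-1})$ to get $e^{-\Omega(n)}$ tails, which is also what survives the union bound over the $\Theta(n^2\log n)$ steps of the coupling phase.

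Two smaller points. Your contraction estimate also requires control of the pair counts $\sum_{e'\neq f}N_{G_i}(X,f,e')$, which by Lemma~\ref{l:countingSubgraphsSum} again reduces to per-edge counts of edge-deleted subgraphs --- the same closure issue. Finally, the paper's written proof only establishes the upper bound; your lower-bound sketch via the edge count is a reasonable standard route, but the claim that $M$ has $O(n)$ fluctuations under $\pi$ needs justification (asymptotic pairwise independence alone is far too weak); the usual fixes are to derive $\mathrm{Var}_\pi(M)=O(n^2)$ from the contraction estimate itself, or to run a coupon-collector second-moment argument on the edges not yet updated by time $t$.
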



\begin{Theorem}[Low temperature]
\label{theo:low}
 If $\varphi(p)$ is in the low temperature regime then the mixing time of the Glauber dynamics is $e^{\Omega(n)}$. Furthermore, this holds not only for the Glauber dynamics but for any local dynamics on $\mathcal{G}_n$.  
\end{Theorem}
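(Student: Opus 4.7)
The plan is a Cheeger (conductance) bottleneck argument built on a one-dimensional free-energy landscape. Introduce
\[F(p) := \sum_{i=1}^{s} \beta_i\, p^{|E_i|} + \tfrac12 h(p), \qquad h(p) := -p\log p - (1-p)\log(1-p).\]
A direct differentiation shows $F'(p) = \tfrac12\bigl[\Psi_\beta(p) - \log\tfrac{p}{1-p}\bigr]$, which vanishes exactly when $\varphi_\beta(p)=p$; moreover $F''(p^*)<0$ is equivalent to $\varphi_\beta'(p^*)<1$. Thus stable fixed points correspond to strict local maxima of $F$, and in the low-temperature regime the hypothesis guarantees two strict local maxima $p_1^*<p_2^*$ separated by a local minimum $\hat p$, giving a strictly positive barrier
\[\Delta := \min\{F(p_1^*),\,F(p_2^*)\} - F(\hat p) > 0.\]

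The core analytic input is the partition-function asymptotic
\[\log Z_n(p) = n^2 F(p) + o(n^2), \qquad Z_n(p) := \sum_{X:\,\rho(X)=p} e^{H(X)},\]
where $\rho(X) := E(X)/\binom{n}{2}$. The entropy term $\binom{n}{2}h(p)$ counts graphs with $p\binom{n}{2}$ edges, and the interaction term follows once one shows that for all but an exponentially small fraction of such graphs the subgraph counts concentrate, $N_{G_i}(X) = n^{|V_i|}p^{|E_i|}(1+o(1))$, so that $H(X)/n^2 \to \sum_i \beta_i p^{|E_i|}$.

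Now set $S := \{X : \rho(X) \leq \hat p\}$. For any local chain updating $o(n)$ edges per step, $\rho$ shifts by at most $o(1/n)$ per step, so the transition-graph boundary $\partial S$ sits inside $B := \{X : |\rho(X) - \hat p| \leq \eta_n\}$ for some $\eta_n = o(1)$. Summing the partition-function asymptotic over edge counts in $B$ and using the lower bound $\log Z_n \geq n^2 \max\{F(p_1^*),F(p_2^*)\} - o(n^2)$ yields $p_n(\partial S) \leq p_n(B) \leq e^{-\Delta n^2/2}$ for large $n$. Choosing $S$ (or its complement) to be the basin of the less-favoured of the two modes so that $p_n(S) \leq 1/2$, but $p_n(S) \geq e^{-\Delta' n^2}$ for some $\Delta' < \Delta$ (estimated from $F$ at the corresponding maximum), the standard bottleneck bound $\tau_{\mathrm{mix}} \geq p_n(S)\bigl/\bigl(4\, Q(S,S^c)\bigr) \geq p_n(S)/(4\, p_n(\partial S))$ gives $\tau_{\mathrm{mix}} \geq e^{\Omega(n^2)}$, comfortably $e^{\Omega(n)}$. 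Only reversibility and $o(n)$-locality enter, so the bound applies to any local chain with stationary distribution $p_n$.

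The main obstacle is the partition-function asymptotic: one must show that subgraph counts under the uniform measure on graphs with prescribed edge count concentrate at the mean-field value $n^{|V_i|}p^{|E_i|}$ tightly enough that the Hamiltonian contributes only $n^2 \sum_i \beta_i p^{|E_i|} + o(n^2)$ to $\log Z_n(p)$, uniformly in $p$. This is essentially a Chatterjee--Varadhan-type large-deviation statement for subgraph counts; together with a mild transfer from the uniform to the Gibbs-tilted conditional measure (absorbing the tilt into the leading $n^2 F(p)$ term), it constitutes the substantive technical work, after which the conductance argument is routine.
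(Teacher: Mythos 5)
Your overall strategy (a conductance bottleneck at an intermediate density between two stable fixed points) is the right shape, and your identification of stable fixed points of $\varphi$ with local maxima of the mean-field free energy $F(p)$ is correct. But the argument has a genuine gap at its load-bearing step: the asymptotic $\log Z_n(p)=n^2F(p)+o(n^2)$ for the partition function restricted to edge density $p$, with $F$ the \emph{constant-graphon} (mean-field) free energy. For the upper bound at $p=\hat p$ you must rule out that graphs of density $\hat p$ with inflated subgraph counts --- which are exponentially rare but receive an exponentially large Gibbs weight $e^{H(X)}$ --- dominate $Z_n(\hat p)$. This is a speed-$n^2$ large-deviation upper bound for subgraph counts together with the assertion that the constrained variational problem $\sup\{\sum_i\beta_i t(G_i,W)+\tfrac12\int h(W):\int W=\hat p\}$ is solved by the constant graphon; if instead a ``phase-separated'' graphon (dense block plus sparse remainder at overall density $\hat p$) does better, then $\pi(B)$ is \emph{not} exponentially small and your bottleneck disappears. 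You flag this as ``the substantive technical work,'' but it is not routine: it is precisely the Chatterjee--Varadhan/Chatterjee--Diaconis machinery, and the paper explicitly states that a full understanding of the low-temperature Gibbs measure along these lines remains out of reach. As written, the proof therefore rests on an unproved (and not obviously true, for the constrained problem) analytic input.

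The paper's proof circumvents this entirely, and it is worth seeing how. First, the bottleneck sets are defined not by the global edge density $\rho(X)$ but by the full vector of local statistics $r_G(X,e)$ over all small graphs $G$ and edges $e$; the barrier $B$ is an annulus $p_i+\eps<\r(X)\le p_i+2\eps$ (and symmetrically for $\rmin$), which any $o(n)$-local chain must cross. Second --- and this is the key trick --- instead of estimating $\pi(B)$ via the partition function, the paper uses the drift/random-walk analysis of the burn-in section (Lemma \ref{l:randomWalk}): started anywhere in $B$, the Glauber dynamics returns to $A$ within $cn^2$ steps except with probability $e^{-\Omega(n)}$. Combining this with stationarity and reversibility of the Glauber dynamics forces $\pi(B)\le e^{-\Omega(n)}\pi(A)(1+o(1))$, with no free-energy computation at all. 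The price of this dynamical shortcut is a weaker exponent, $e^{\Omega(n)}$ rather than the $e^{\Omega(n^2)}$ your approach would yield if its analytic input were established; the benefit is that the only inputs are the local drift estimates already proved for the high-temperature analysis. To repair your proof you would either need to supply the restricted free-energy asymptotics rigorously, or replace the estimate of $\pi(B)$ by a stationarity argument of the paper's type.
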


The next theorem shows that the exponential random graph model is not appreciably different from Erd\H{o}s-R\'enyi random graph model in the high temperature regime where sampling is possible. 

\begin{Theorem}[Asymptotic independence of edges] \label{t:asym}
  Let $X$ be drawn from the exponential random graph distribution in the high temperature phase. 
  Let $e_1,\dots,e_k$ be an arbitrary collection of edges with associated indicator random variables $x_{e_i}=\ind(e_i\in X)$. Then for any $\eps>0$, there is an $n$ such that for all $(a_1,\dots,a_k)\in \{0,1\}^k$ the random variables $x_{e_1},\dots,x_{e_k}$ satisfy 
  $$\left|\P(x_1=a_1,\dots,x_k=a_k)-(p^*)^{\sum a_i}(1-p^*)^{k-\sum a_i}\right|\leq \frac{\eps}{n^{|V|}}\,.$$
  Thus, the random variables  $x_{e_1},\dots,x_{e_k}$ are asymptotically independent.
\end{Theorem}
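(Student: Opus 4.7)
The plan is to condition on the configuration of all edges other than $e_1,\ldots,e_k$ and reduce the statement to two quantitative ingredients: a sharp concentration estimate for the edge density $\rho(X):=2E(X)/n(n-1)$ at $p^*$, and a negligibility estimate for all higher-order mixed partial derivatives of $H$.

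Set $\tilde X := X\setminus\{e_1,\ldots,e_k\}$. Under the Gibbs measure \eqref{eqn:gibbs}, the conditional law of $(x_{e_1},\ldots,x_{e_k})$ given $\tilde X$ is again a Gibbs measure on $\{0,1\}^k$ with effective Hamiltonian
\[
\tilde H(a_1,\ldots,a_k) \;=\; \sum_{S\subseteq\{1,\ldots,k\}} C_S(\tilde X)\,\prod_{i\in S} a_i,
\]
where each coefficient $C_S(\tilde X)$ is a signed combination of iterated partial derivatives $\partial_{e_{i_1}}\!\cdots\partial_{e_{i_r}}H$ computed from $\tilde X$. I would estimate these coefficients by power-of-$n$ counting on the contributing subgraph counts $N_{G_j}$. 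For singletons, $C_{\{i\}}(\tilde X) = \Psi(\rho(\tilde X)) + O(n^{-1})$, since $N_{G_j}(\tilde X,e_i)/n^{|V_j|-2}$ involves two pinned vertices and $|V_j|-2$ free vertices and, in a typical graph, averages to $2|E_j|\rho^{|E_j|-1}$. For $|S|=r\geq 2$, the $r$ edges in $S$ pin at least $r+1$ (and typically $2r$) distinct vertices of any embedded $G_j$, so $N_{G_j}$ loses a factor of $n$ per extra pinned vertex while the normalization $n^{|V_j|-2}$ is unchanged. Hence $C_S(\tilde X) = O(n^{-(|V(S)|-2)})$, and in particular all multi-edge interaction coefficients are polynomially small. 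Consequently, up to polynomially small corrections, the conditional law on the $k$ edges factorizes as the product Bernoulli$(\varphi(\rho(\tilde X)))^{\otimes k}$.

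The last and main step is to prove sharp concentration of $\rho(X)$ itself. I would show that under the Gibbs measure in the high-temperature phase,
\[
\P\!\left(|\rho(X)-p^*|>\delta\right) \;\leq\; C\,e^{-c n^2 \delta^2},
\]
for constants $C,c$ depending on $\boldsymbol\beta$. The plan is to combine the rapid-mixing estimate of Theorem~\ref{theo:high} with the contraction property $\varphi'(p^*)<1$: following $\rho(X_t)$ along the monotone coupling of the Glauber dynamics gives a near-martingale with drift of magnitude proportional to $\rho(X_t)-p^*$ pushing back toward $p^*$, together with single-step increments of size $O(1/n^2)$; Azuma-type martingale concentration then yields sub-Gaussian tails at scale $n^{-1}$. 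Plugging this density concentration into the conditional factorization from the previous step, Taylor-expanding $\varphi$ around $p^*$, and integrating over $\tilde X$ gives $\left|\P(x_{e_1}=a_1,\ldots,x_{e_k}=a_k)-(p^*)^{\sum a_i}(1-p^*)^{k-\sum a_i}\right|\leq \eps/n^{|V|}$ for $n$ sufficiently large.

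The main obstacle is the exponential density concentration: rapid mixing alone yields only polynomial concentration, which is too weak for the bound. Promoting it to sub-Gaussian tails at scale $n^{-1}$ requires genuinely using the contraction $\varphi'(p^*)<1$ that characterizes the high-temperature phase, ideally via the same path coupling that underlies Theorem~\ref{theo:high}: the contraction coefficient that drives the mixing bound simultaneously controls the variance of the density. A secondary technical point is that the Taylor expansion of $\Ex[\varphi(\rho)]$ around $p^*$ must be carried out to the order dictated by $|V|$, exploiting the fact that the first-moment term $\Ex[\rho-p^*]$ is further suppressed by the symmetry of the Gaussian tail around the stable fixed point. Once these ingredients are in place, the remainder of the argument is power-of-$n$ bookkeeping on the subgraph counts.
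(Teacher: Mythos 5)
There is a genuine gap in the reduction. Your conditional-law decomposition is sound in its structure: conditioning on $\tilde X$ does give a Gibbs measure on $\{0,1\}^k$ whose multi-edge interaction coefficients $\partial_{e_i}\partial_{e_j}H$ are $O(n^{-1})$ (this is exactly equation \eqref{e:smallDiff}), so the conditional law is close to a product of Bernoullis with parameters $e^{\partial_{e_i}H(\tilde X)}/(1+e^{\partial_{e_i}H(\tilde X)})$. The problem is your identification $\partial_{e_i}H(\tilde X)=\Psi(\rho(\tilde X))+O(n^{-1})$. The quantity $\partial_{e_i}H(\tilde X)=\sum_j \beta_j N_{G_j}(\tilde X,e_i)/n^{|V_j|-2}$ is a \emph{local} statistic: for a triangle term it is essentially the codegree of the endpoints of $e_i$, which for a graph of global edge density $\rho$ can be anywhere between $0$ and $n-2$. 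It is not a function of $\rho(\tilde X)$ plus a small error; the phrase ``in a typical graph, averages to $2|E_j|\rho^{|E_j|-1}$'' is precisely the quasi-randomness assertion that has to be \emph{proved} about the Gibbs measure, not assumed. Your subsequent concentration step targets only the edge density $\rho(X)$, which cannot close this gap: edge-density concentration does not control codegrees or any of the higher subgraph statistics entering $\partial_{e_i}H$. The input you actually need is Lemma~\ref{l:highTempBurnIn} (together with rapid mixing to transfer it to the stationary measure): with probability $1-o(1)$ \emph{all} the statistics $r_G(X,e)$ lie within $\eps$ of $p^*$, which by Lemma~\ref{lemma:sbg-count} pins $\partial_{e_i}H$ near $\Psi(p^*)$ and hence the conditional inclusion probability near $\varphi(p^*)=p^*$. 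With that substitution your conditioning argument can be repaired; the paper itself proceeds differently, by inclusion-exclusion over subsets $T\subseteq[k]$ and the symmetry identity $\P(x_T=1)=\E[N_{G_T}(X)]/n^{|V|}$, but it rests on the same burn-in concentration of the $r_G(X,e)$.

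A secondary problem is the error term. To reach $\eps/n^{|V|}$ your plan invokes a Taylor expansion of $\E[\varphi(\rho)]$ around $p^*$ ``to the order dictated by $|V|$,'' with the first moment $\E[\rho-p^*]$ ``suppressed by the symmetry of the Gaussian tail.'' There is no such symmetry: the Gibbs measure is not symmetric about $p^*$, and nothing in the high-temperature hypothesis forces $\E[\rho]-p^*$ to be $o(n^{-2})$, let alone $O(n^{-|V|})$. Even granting sub-Gaussian concentration of $\rho$ at scale $n^{-1}$, this route yields an error of order $n^{-1}$ or $n^{-2}$, not $n^{-|V|}$; since $\eps$ is arbitrary the qualitative independence statement survives, but the quantitative bound as stated does not follow from your argument.
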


A consequence is that a graph sampled from the exponential random graph distribution is with high probability weakly pseudo-random (see \cite{KrSu} or \cite{CGW89}). This means that it satisfies a number of equivalent properties, including large spectral gap and correct number of subgraph counts, that make it very similar to an Erd\H{o}s-R\'enyi random graph. 

\begin{Corollary}[Weak pseudo-randomness]
With probability $1-o(1)$ an exponential random graph is weakly pseudo-random. 
\end{Corollary}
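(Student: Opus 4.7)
By the Chung--Graham--Wilson equivalence theorem \cite{CGW89}, a sequence of graphs on $n$ vertices with edge density tending to $p\in(0,1)$ is weakly pseudo-random if and only if the number of labelled copies of $C_4$ is $(1+o(1))(n)_4\, p^4$. So it is enough to show that with probability $1-o(1)$,
$$N_{G_1}(X) = (1+o(1))\,n(n-1)\,p^* \quad\text{and}\quad N_{C_4}(X) = (1+o(1))\,(n)_4\,(p^*)^4.$$
Both will be obtained via a standard second-moment argument driven entirely by Theorem~\ref{t:asym}.

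For the first moments, Theorem~\ref{t:asym} with $k=1$ gives $\mathbb{E}[x_e] = p^* + o(1/n^2)$, hence $\mathbb{E}[N_{G_1}(X)] = (1+o(1))\,n(n-1)\,p^*$. Applying it with $k=4$ to the four edges of a fixed labelled $C_4$ and summing over the $(n)_4$ ordered placements yields $\mathbb{E}[N_{C_4}(X)] = (1+o(1))\,(n)_4\,(p^*)^4$.

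For the variances, write $\mathrm{Var}(N_{G_1}) = \sum_{e,f}\mathrm{Cov}(x_e,x_f)$. Theorem~\ref{t:asym} with $k=2$ bounds each off-diagonal covariance by $\varepsilon/n^{|V(e)\cup V(f)|}$. Summing over edge-disjoint pairs ($O(n^4)$ pairs, each contributing $O(\varepsilon/n^4)$) and pairs sharing a vertex ($O(n^3)$ pairs, each contributing $O(\varepsilon/n^3)$) gives $\mathrm{Var}(N_{G_1}) = O(n^2)+O(\varepsilon) = o(n^4) = o(\mathbb{E}[N_{G_1}]^2)$. Similarly, $\mathrm{Var}(N_{C_4}) = \sum_{\mathbf{v},\mathbf{v}'}\mathrm{Cov}(Y_{\mathbf{v}},Y_{\mathbf{v}'})$ with $Y_{\mathbf{v}}=\prod_{e\in C_4(\mathbf{v})} x_e$. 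For vertex-disjoint $\mathbf{v},\mathbf{v}'$, applying Theorem~\ref{t:asym} to the eight edges of $C_4(\mathbf{v})\cup C_4(\mathbf{v}')$ bounds the covariance by $o(1/n^8)$, and summing over the $O(n^8)$ such pairs contributes $o(1)$; for the $O(n^7)$ pairs sharing at least one vertex, the trivial bound $O(1)$ on each covariance contributes $O(n^7)=o(n^8)$. Thus $\mathrm{Var}(N_{C_4}) = o(\mathbb{E}[N_{C_4}]^2)$, and Chebyshev's inequality concludes both concentrations.

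The only substantive step is the covariance bound, which requires telescoping the difference between the true joint probability of several edges and the product of their individual marginals through the asymptotic-independence estimate of Theorem~\ref{t:asym}, with the overlapping-tuple contributions (too few to matter) handled by the trivial bound rather than by the asymptotic one. Everything else is bookkeeping.
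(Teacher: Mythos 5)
Your argument is correct in substance but takes a genuinely different route from the paper. The paper's proof is essentially deterministic: the burn-in analysis (Lemma \ref{l:highTempBurnIn}) shows that with probability $1-e^{-\Omega(n)}$ the configuration lies in the good set where every statistic $r_G(X,e)$ is within $\eps$ of $p^*$, and equation \eqref{e:SubgraphCountsPseudo} converts this pointwise into the statement that every small subgraph count $N_{G}(X)$ is $(1+o(1))(p^*)^{|E|}n^{|V|}$ on that event; the edge and cycle counts of condition 4 are then read off directly, with no second-moment computation. You instead treat Theorem \ref{t:asym} as a black box, run a Chebyshev argument for the edge and $C_4$ counts, and invoke the Chung--Graham--Wilson equivalence. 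This is a legitimate derivation (the needed uniformity over tuples of edges follows from the vertex-transitivity of the model, so the probabilities depend only on the isomorphism type of the edge configuration), and it has the virtue of depending only on the statement of Theorem \ref{t:asym} rather than on the internals of the burn-in analysis. What it gives up is the exponentially small failure probability and the simultaneous control of \emph{all} small subgraph counts that the paper's good-set argument provides for free.

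One quantitative slip worth fixing: Theorem \ref{t:asym} with $k=2$ bounds $\left|\P(x_e=1,x_f=1)-(p^*)^2\right|$ by $\eps/n^{4}$ for disjoint edges; it does not bound the covariance by that amount. Since each marginal is only known to within $\eps/n^{2}$ of $p^*$, the telescoping you describe yields $|\mathrm{Cov}(x_e,x_f)|=O(\eps/n^{2})$, and similarly $|\mathrm{Cov}(Y_{\mathbf{v}},Y_{\mathbf{v}'})|=O(\eps/n^{4})$ for vertex-disjoint $C_4$'s, not $o(1/n^{8})$. These weaker bounds still suffice: they give $\mathrm{Var}(N_{G_1})=O(n^2)+O(\eps n^{2})=o(n^4)$ and $\mathrm{Var}(N_{C_4})=O(n^{7})+O(\eps n^{4})=o(n^{8})$, so Chebyshev still closes the argument, but the covariance magnitudes as you state them are not what the theorem delivers.
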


\subsection{Idea of the proof}
We give a summary of the main ideas of the proof:
\begin{itemize}
\item Consider first the high temperature phase. A natural approach to bounding the coupling time, and hence the mixing time by Lemma \ref{lemma:mixing}, is to use the technique of {\it path coupling} \cite{bubley-dyer97}. In path coupling, instead of trying to couple from every pair of states, we try to show that for any pair of states $x$ and $y$ that differ in a single edge there exists a coupling of two copies of the chain started at $x$ and $y$ such that
\begin{equation}
\label{eqn:dob}
\E(d_H(X(1), Y(1))|X(0) = x, Y(0) =y) \leq (1-\beta) 
\end{equation}
for some $\beta = \beta(n)$, where $d_H$ is the Hamming distance. However, this approach fails for some $\phi_\beta$ in the high temperature regime when
$\sup_{0\leq p \leq 1} \phi'(p) > 1$.

\item  
It turns out that the configurations in the high temperature regime where path coupling fails are very rare under the Gibbs measure.  We therefore define a set (a neighborhood of the unique fixed point $\varphi_{\mathbf{\beta}}^\prime(\cdot)< 1$), in which path coupling does give a contraction. 
More precisely, for a configuration $X$, define 
\begin{equation}
\label{e:r_G}
\rGe= \left(\frac{\NGe}{2|E| n^{|V|-2}}\right)^{\frac{1}{|E|-1}}\,.
\end{equation}
This is (asymptotically) the maximum likelihood choice for the parameter $p$ of the Erd\"os-Renyi random graph on $n$ vertices, $G(n,p)$, having observed $\NGe$ subgraphs $G$ containing the edge $e$.  Let $\{G_\lam\}$ denote the class of all graphs with at most $L$ vertices, where $L$ is some integer greater than or equal to $\max_i |V_i|$. What we prove is that for $\eps$ small enough, if the two configurations $x$ and $y$ belong to the set
\[\mathbf{G} := \{X: \max_{G\in G_\lam\atop e\in E} |\rGe - p^*| <      \eps        \}\]
then equation \ref{eqn:dob} holds for $\beta(n) = -\delta/n^2$ for some $\delta> 0$.  Thus, starting from any state $x$, if we can show that in a small number of steps ($O(n^2)$ is enough) we reach $\mathbf{G}$, then a variant of path coupling proves rapid mixing. This preliminary stage where we run the Markov chain for some steps so that it reaches a ``good configuration'' is termed the {\it burn in} phase. This approach has been used before, particularly in proving mixing times for random colourings, for example in \cite{dyer-frieze}.

  \item 
To show that we enter the good set $\mathbf{G}$ quickly,   we control all the $\rGe$,  for all subgraphs $G\in G_\lambda$ simultaneously, and via a coupling with biased random walks show that with exponentially high probability for large $n$, within $O(n^2)$ steps we reach the set $\mathbf{G}$. We crucially make use of the monotonicity of the system here by writing the drifts in terms of the $\rGe$ and bounding them by their maximum.  This completes the proof for the rapid mixing in the high temperature phase. This also shows how in the high temperature phase, most of the Gibbs measure of the exponential random graph model is concentrated on configurations which are essentially indistinguishable from the Erd\"os-Renyi $G(n, p^*)$ random graph model.  

\item In the low temperature phase we use a conductance argument to show slow mixing for any Markov chain that updates $o(n)$ edges per time step. The argument makes use of the same random walk argument used in the burn in stage to bound the measure of certain sets of configurations under the Gibbs measure. Specifically, we show that for every fixed point $p^*$ of the  equation $\phi(p)=p$ with $\phi^\prime(p)< 1$, the Glauber dynamics allows an exponentially small flow of probability to leave the set of configurations that are nearly indistinguishable from an Erd\"os-Reny\'i random graph with parameter $p^*$. Because the stationary distribution of the Glauber dynamics is the Gibbs measure, this allows us to bound the relative measure of the sets under consideration, thereby showing that if we have two or more fixed points $p^*$, then it takes an exponentially long time for configurations to leave the set of configurations indistinguishable from an Erd\"os-Reny\'i random graph with parameter $p^*$. Thus mixing takes an exponentially long time. 

\end{itemize}


\section{Proof of the main results}
\subsection{Subgraph counts}
Before starting the proof we need a couple of simple lemmas on the subgraph counts. For a graph $X\in \mathcal{G}_n$ recall the subgraph counts $N_G(X)$ of a predefined graph $G$ on $m$ nodes as well as the counts in $X$ of the subgraphs containing edges, namely $N_G(X, e)$ and $N_G(X, e, e^\prime)$ as defined in Section \ref{sec:def}. 

%

The following lemma records the quantities $N_G(X)$, $N_G(X,e)$, and $N_G(X,e,e')$ for the complete graph $X=K_n$. 


\begin{lemma}
\label{lemma:sbg-count}
Consider the complete graph on $n$ vertices, $K_n$, and let $N_G(K_n)$, $N_G(K_n,e)$, and $N_G(K_n,e,e')$ be defined as above.  Then 
\\(a) \[N_G(K_n) = {n\choose |V|}|V|! \sim n^{|V|}   \]
(b)  
\[N_G(K_n, e) = 2|E| {n-2 \choose |V|-2}(|V|-2)!\sim 2 |E| \cdot  n^{|V| - 2}\]
(c) For a fixed edge $e$ we have
\[\sum_{e^\prime\neq e} N_G(K_n, e,e^\prime) = (|E|-1)N_G(K_n,e)\sim 2|E|(|E|-1)n^{|V| -2} \]


\end{lemma}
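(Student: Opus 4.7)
The three statements are elementary combinatorial identities counting ordered embeddings of $G$ into the complete graph $K_n$. The plan is direct enumeration, exploiting the fact that in $K_n$ every possible edge is present, so the containment indicator $\ind\{H_{K_n}(\mathbf{v}_m)\cong G\}$ is automatically $1$ for every ordered tuple $\mathbf{v}_m$ of $|V|$ distinct vertices. Hence the three counts reduce to counting ordered tuples satisfying placement constraints coming from the distinguished edges $e$ and $e'$.

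For (a), every ordered $|V|$-tuple of distinct vertices contributes once, so $N_G(K_n)$ equals $|[n]^{|V|}| = n(n-1)\cdots(n-|V|+1) = \binom{n}{|V|}|V|!$, which is asymptotic to $n^{|V|}$. For (b), the count $N_G(K_n, e)$ restricts to tuples in which $(a,b)$ sits at a pair of positions $(i,j)$ forming an ordered edge of $G$. There are $2|E|$ such ordered edges of $G$, and once the positions of $a,b$ are fixed the remaining $|V|-2$ slots are filled by distinct vertices from $[n]\setminus\{a,b\}$, contributing a factor $(n-2)(n-3)\cdots(n-|V|+1) = \binom{n-2}{|V|-2}(|V|-2)!$. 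Since the entries of $\mathbf{v}_m$ are distinct, a given tuple admits at most one placement of $(a,b)$, so no double counting occurs; multiplication yields the closed form and the asymptotic $\sim 2|E|\, n^{|V|-2}$.

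For (c), the plan is double counting. Fix a tuple $\mathbf{v}_m$ counted by $N_G(K_n, e)$, so that $a,b$ occupy positions $(i,j)$ with $(i,j)\in E(G)$. For each of the $|E|-1$ other unordered edges $\{i',j'\}\in E(G)$, set $e' := \{v_{i'}, v_{j'}\}$; the tuple then uses both $e$ and $e'$ in its embedding, hence is counted by $N_G(K_n, e, e')$. Because the $v_k$ are distinct, different choices of $\{i',j'\}$ produce distinct unordered pairs $\{v_{i'}, v_{j'}\}$, and none of these equals $\{a,b\}=e$ since $\{i',j'\}\neq \{i,j\}$. Conversely, every contribution to $\sum_{e'\neq e} N_G(K_n, e, e')$ arises in exactly this way, because such a tuple automatically uses $e$ and hence is counted by $N_G(K_n, e)$. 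Therefore the sum equals $(|E|-1) N_G(K_n, e)$, and inserting (b) gives the claimed asymptotic $\sim 2|E|(|E|-1)\,n^{|V|-2}$.

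The only place requiring care is the distinctness argument in (c): one must verify that as $\{i',j'\}$ ranges over the $|E|-1$ edges of $G$ other than $\{i,j\}$, the edges $e'=\{v_{i'}, v_{j'}\}$ are pairwise distinct and all different from $e$. Both facts follow immediately from the injectivity of the map $k \mapsto v_k$. Everything else is routine enumeration.
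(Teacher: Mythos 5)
Your proof is correct and is essentially the argument the paper intends: the paper states this lemma without proof as an elementary enumeration, and your parts (a) and (b) are the standard direct counts, while your double-counting in (c) is precisely the marked-edge bijection the paper uses to prove the companion identity in Lemma~\ref{l:countingSubgraphsSum} (here specialized to $X=K_n$). The one interpretive point you handle correctly is that $N_G(K_n,e)$ counts only those tuples in which $a,b$ occupy positions forming an edge of $G$ (so that the embedded copy actually uses $e$), which is what makes the factor $2|E|$ rather than $|V|(|V|-1)$ come out.
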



\begin{lemma}\label{l:countingSubgraphsSum}
   For an edge $\al$ in the graph $G$, denote by $G_\al$ the graph obtained from $G$ by removing the edge $\al$. Then  \begin{equation}\label{e:countingSubgraphsSum}
     \sum_{e'\neq e} \NGee=\sum_{\al\in E(G)\atop \al\neq e} N_{G_\al}(X,e)\,.
   \end{equation}
\end{lemma}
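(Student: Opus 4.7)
My plan is to prove the identity by double counting: both sides enumerate the same family of triples $(\mathbf{v}_m, \alpha_e, \alpha)$, where $\mathbf{v}_m \in [n]^m$ is an injective ordered tuple, $\alpha_e$ and $\alpha$ are two distinct edges of $G$, the endpoints of $e=(a,b)$ occupy the positions $\alpha_e$ in $\mathbf{v}_m$ (i.e.\ $\{v_{\alpha_e[1]}, v_{\alpha_e[2]}\} = \{a,b\}$), and every remaining edge $\beta \in E(G)\setminus\{\alpha_e,\alpha\}$ embeds into a genuine edge of $X$ under the induced map $i \mapsto v_i$.

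First I would expand the left-hand side. A tuple $\mathbf{v}_m$ contributes to $N_G(X,e,e')$ exactly when it contains the endpoints of both $e$ and $e'=(c,d)$ and when $G$ embeds into $X \cup \{e,e'\}$. Since the embedding is injective on vertices, it is also injective setwise on edges, so $e$ and $e'$ must occupy \emph{unique} edge positions $\alpha_e$ and $\alpha_{e'}$ of $G$ with $\alpha_e \neq \alpha_{e'}$, and every other edge of $G$ is then forced to land in $X$. Summing over $e' \neq e$ is the same as summing over the choice of $\alpha_{e'} \neq \alpha_e$ together with the tuple, because $e' = (v_{\alpha_{e'}[1]}, v_{\alpha_{e'}[2]})$ is determined by the pair $(\mathbf{v}_m, \alpha_{e'})$, and the constraint $e'\neq e$ is automatic from $\alpha_e \neq \alpha_{e'}$. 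Thus the left-hand side enumerates exactly the triples described above, with $\alpha_{e'}$ playing the role of $\alpha$.

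Next I would expand the right-hand side in the same spirit. For fixed $\alpha \in E(G)$, the quantity $N_{G_\alpha}(X,e)$ counts tuples in which $(a,b)$ occupies some edge position $\alpha_e \in E(G_\alpha) = E(G)\setminus\{\alpha\}$ and every edge of $G_\alpha$ embeds into $X \cup \{e\}$; injectivity forces every $\beta \in E(G_\alpha)\setminus\{\alpha_e\}$ to embed into a true edge of $X$. Summing over $\alpha \in E(G)$ therefore enumerates precisely the triples $(\mathbf{v}_m, \alpha_e, \alpha)$ with $\alpha \neq \alpha_e$ and the same embedding constraints, which agrees with the enumeration from the left-hand side and establishes \eqref{e:countingSubgraphsSum}.

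The only real obstacle is bookkeeping: the ambient edges $e, e'$ live in the edge set on $[n]$, while the pattern edges $\alpha, \alpha_e$ live inside $V(G)$, and one must track which role each plays in the embedding. The automatic distinctness $\alpha_e \neq \alpha_{e'}$ coming from vertex-injectivity is what lets the two points of view line up, and the qualifier ``$\alpha \neq e$'' in the statement becomes vacuous under this reading (since $\alpha$ and $e$ inhabit disjoint universes, and the relevant constraint $\alpha \neq \alpha_e$ is already built into the definition of $G_\alpha$). No analytic estimates are needed; the identity is purely combinatorial.
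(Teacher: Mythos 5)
Your proof is correct and is essentially the paper's own argument: both sides are shown to count embeddings of $G$ containing $e$ together with a marked second edge, with the marked host edge $e'$ on the left corresponding to the marked pattern edge $\al$ on the right. You simply spell out the bijection more explicitly (including the observation that vertex-injectivity forces $\alpha_e\neq\alpha_{e'}$ and pushes all remaining pattern edges into $X$), which the paper leaves implicit.
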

\begin{proof}
  The sum on the left-hand side of \ref{e:countingSubgraphsSum} counts the total number of isomorphic embeddings of $G$ that contain the edge $e$ in the configuration $X\cup \{e'\}$, for some $e'$ with the edge $e'$ marked. Now, each isomorphism with marked edge $e'$ is counted on the right-hand side of \ref{e:countingSubgraphsSum} for the choice $\al$ equal to the marked edge in the graph $G$, with the same isomorphism restricted to $G_\al$. Conversely, for each $\al\in E(G)$ and each subgraph embedding, the same embedding is counted on the left-hand side with the edge $e'$ situated at the location $\al$. 
\end{proof}


\subsection{Burn-in period}\label{s:burnIn}
In this section we show that after a suitably short ``burn in" period, the Markov chain is in the good set $\mathbf{G}$. Let $\r(X)= \max_{e,\lam} r_{G_\lam}(X,e)$. The following lemma bounds the expected drift of $\r(X)$. 

\begin{lemma} \label{l:driftCalculation}
 The expected change in $N_G(X,e)$ after one step of the Glauber dynamics, starting from the configuration $X$, can be bounded as
    $$\E\left[ \frac{N_G(X(1),e) - N_G(X(0),e)}{n^{|V|-2}}\right]\leq (1+o(1))\frac{2}{{n\choose 2}} |E| (|E|-1) [-r(G,e)^{|E|-1}+\phi(\r)(\r)^{|E|-2}]\,.$$
\end{lemma}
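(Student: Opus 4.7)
The proof is a direct one-step drift computation: condition on $X(0)=X$, sum over the uniformly chosen edge $e'$ to be updated, and use the explicit transition probabilities from Lemma~\ref{l:transitionProbabilities}. Because $N_G(X,e)$ is a function of $X\cup\{e\}$, updating $e$ itself is inert; for $e'\neq e$, flipping $x_{e'}$ changes $N_G(X,e)$ by exactly $\pm N_G(X,e,e')$. Setting $p_+(e';X) = e^{\partial_{e'}H(X)}/(1+e^{\partial_{e'}H(X)})$, this gives the exact identity
$$\E\bigl[N_G(X(1),e)-N_G(X(0),e)\,|\, X\bigr] \;=\; \frac{1}{\binom{n}{2}}\sum_{e'\neq e}\bigl(p_+(e';X)-x_{e'}\bigr)\,N_G(X,e,e')\,,$$
which I split into an ``addition'' piece (driven by $p_+$) and a ``removal'' piece (driven by $-x_{e'}$).

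For the addition piece, I would expand $\partial_{e'}H(X) = \sum_i \beta_i N_{G_i}(X,e')/n^{|V_i|-2} = \sum_i 2\beta_i|E_i|\,r_{G_i}(X,e')^{|E_i|-1}$ via the definition~(\ref{e:r_G}). Because $\beta_i\ge 0$ for $i\ge 2$ and the $i=1$ contribution is $r$-independent, each summand is monotone nondecreasing in $r_{G_i}$; substituting $r_{G_i}(X,e')\le \r(X)$ yields $\partial_{e'}H(X)\le \Psi(\r(X))$, and hence $p_+(e';X)\le \phi(\r(X))$ by monotonicity of the sigmoid. Since $N_G(X,e,e')\ge 0$ this bound may be applied term-by-term. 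Lemma~\ref{l:countingSubgraphsSum} combined with the estimate $N_{G_\alpha}(X,e)\le 2(|E|-1)\,n^{|V|-2}\,\r(X)^{|E|-2}$ (immediate from the definition of $\r$, once one notes that every $G_\alpha$ lies in the fixed class $\{G_\lambda\}$) then gives $\sum_{e'\neq e} N_G(X,e,e') \le 2|E|(|E|-1)\,n^{|V|-2}\,\r^{|E|-2}$. For the removal piece, a classical double-count — each embedding of $G$ in $X\cup\{e\}$ containing $e$ has exactly $|E|-1$ other edges, all automatically in $X$ — gives $\sum_{e'\neq e,\,x_{e'}=1} N_G(X,e,e') = (|E|-1)\,N_G(X,e) = 2|E|(|E|-1)\,n^{|V|-2}\,r(G,e)^{|E|-1}$.

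Inserting both estimates into the drift identity and dividing through by $n^{|V|-2}$ produces exactly the stated inequality. The $(1+o(1))$ absorbs $O(1/n)$ corrections that appear when removing an edge $\alpha\in E(G)$ leaves isolated vertices in $G_\alpha$ and slightly perturbs the leading normalization of $N_{G_\alpha}$; since $\{G_\lambda\}$ is fixed and finite these perturbations are uniform. The main point requiring care is bookkeeping of the three flavors of counts $N_G$, $N_G(X,e)$, and $N_G(X,e,e')$: Lemma~\ref{l:countingSubgraphsSum} is the crucial input, and the sigmoid replacement $p_+\le \phi(\r)$ must be applied before extracting $\phi(\r)$ from the sum so that the inequality preserves its sign throughout.
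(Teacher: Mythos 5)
Your proof is correct and follows essentially the same route as the paper's: the same exact one-step decomposition into a removal term $(|E|-1)N_G(X,e)$ and an addition term $\sum_{e'\neq e} N_G(X,e,e')\,\P(X_{e'}(1)=1\mid e'\text{ updated})$, the same sigmoid bound $\P(X_{e'}(1)=1)\leq(1+o(1))\phi(\r)$ via Lemma~\ref{l:transitionProbabilities}, and the same use of Lemma~\ref{l:countingSubgraphsSum} to control $\sum_{e'\neq e}N_G(X,e,e')$ through $\r$. The only cosmetic difference is that you express the bounds directly via the definition of $r_G$ rather than passing through $N_G(K_n,e)$, which renders a couple of the paper's $(1+o(1))$ factors exact.
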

\begin{proof}
For ease of notation, we suppress the dependence of $\r$ on the configuration $X$. 
The expected change, after one step of the Glauber dynamics, in the number of isomorphisms from $G$ to subgraphs of $X$ containing the edge $e$ can be counted by first negating the expected loss in number when removing a random edge $e'$ (leaving the configuration unchanged if $e'$ was not present), and then adding the expected number of graphs created by including a random edge $e'$. This gives
\begin{equation}\begin{split}\label{e:DeltaN}
  &\E\left[ \frac{N_G(X(1),e) - N_G(X(0),e)}{n^{|V|-2}}\right]\\& = \frac{1}{n^{|V|-2}}\left[-{n\choose 2}^{-1}(|E|-1)N_G(X,e)+  {n\choose 2}^{-1} \sum_{e'\neq e} N_G(X,e,e')  \P(X_{e'}(1)=1|e' \hbox{ updated})\right] \,.
\end{split} \end{equation}

Now, we may upper bound the probability of including an edge using Lemma~\ref{l:transitionProbabilities} and the definition of $\r$:
\begin{equation} \begin{split} \label{e:ProbIncludeEdge}
\P(X_{e'}(1)=1|e' \hbox{ updated}) & = \frac{\exp(\partial_e H(X))}{\exp(\partial_e H(X))+1}  \\ &= \frac{\exp\left( \sum_i \beta_i \frac{N_{G_i}(X,e)}{n^{|V|-2}}\right)}{\exp\left( \sum_i \beta_i \frac{N_{G_i}(X,e)}{n^{|V|-2}}\right)+1}
  \\ & 
  \leq (1+o(1)) \frac{\exp\left( \sum_i \beta_i \frac{N_G(K_n,e)\rei}{n^{|V|-2}}\right)}{\exp\left( \sum_i \beta_i \frac{N_G(K_n,e)\rei}{n^{|V|-2}}\right)+1}
  \\ & =(1+o(1))   \phi(\r)\,.
\end{split} \end{equation}
Next, by Lemmas~\ref{lemma:sbg-count}  and~\ref{l:countingSubgraphsSum} and the definition of $\r$, we have
\begin{equation}\begin{split}\label{e:edgeSumEstimate}
  \sum_{e'}\NGee 
  & =\sum_{\al} N_{G_\al}(X,e)\\
  & \leq \sum_{\al}  N_{G_\al}(K_n,e)(\r)^{|E|-2}\\
  & = \sum_{e'\neq e} N_G(K_n,e,e')(\r)^{|E|-2}
  \\ & = |E|(|E|-1) 2 n^{|V|-2} (\r)^{|E|-2}(1+o(1))\,.
\end{split} \end{equation}
Using the estimates \eqref{e:ProbIncludeEdge} and \eqref{e:edgeSumEstimate}, equation \eqref{e:DeltaN} gives
\begin{align*}
  &\E\left[ \frac{N_G(X(1),e) - N_G(X(0),e)}{n^{|V|-2}}\right]\\ &\leq (1+o(1)) \frac{1}{n^{|V|-2} {n\choose 2}}\left[ -(|E|-1)N_G(X,e) + \phi(\r) 2 |E| (|E|-1) n^{|V|-2}(\r)^{|E|-2} \right] \\
  &=   (1+o(1)) \frac{1}{n^{|V|-2}{n\choose 2}}\left[ -(|E|-1)2|E| n^{|V|-2} r(G,e)^{|E|-1} + \phi(\r) 2 |E| (|E|-1) n^{|V|-2}(\r)^{|E|-2} \right]
  \\ & =(1+o(1)) 
  \frac{2}{{n\choose 2}} |E| (|E|-1)  [-r(G,e)^{|E|-1}+\phi(\r)(\r)^{|E|-2}]\,.
\end{align*}

\end{proof}

\begin{lemma}\label{l:randomWalk}
  Let $p^*$ be a solution of the equation $\phi(p)=p$ with $\phi'(p^*)<1$, and let $\bar{p}$ be the least solution greater than $p^*$ of the equation $\phi(p)=p$ if such a solution exists or 1 otherwise. Let the initial configuration be $X(0)$, with $p^*+\mu\leq \r(X(0))\leq \bar{p}-\mu$ for some $\mu>0$. Then there is a $\delta,c>0$, depending only on $\mu, L$ and $\phi$, so that after $T=c n^2$ steps of the Glauber dynamics, it holds that $\r(X(T))\leq \r(X(0))-\delta$ with probability $1-e^{-\Omega(  n)}$. 
\end{lemma}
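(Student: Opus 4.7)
The plan is to treat each $\tilde r_{G,e}(t) := r_G(X(t),e)^{|E|-1}$ as a biased random walk driven by the drift bound of Lemma~\ref{l:driftCalculation}, to combine a deterministic comparison with a Freedman concentration estimate, and to union-bound over the polynomially many pairs $(G,e)$. Set $r_0 = \r(X(0))$. Dividing the conclusion of Lemma~\ref{l:driftCalculation} by $2|E|$ yields
\[
\E\bigl[\tilde r_{G,e}(t+1) - \tilde r_{G,e}(t)\,\big|\,\F_t\bigr] \leq \frac{(|E|-1)(1+o(1))}{\binom{n}{2}}\bigl[\phi(\r(X(t)))\,\r(X(t))^{|E|-2} - \tilde r_{G,e}(t)\bigr].
\]
Since $\phi$ is strictly increasing and $\phi(p)<p$ on $(p^*,\bar p)$, continuity produces constants $\eta,\delta_0>0$ (depending only on $\mu,\phi,L$) with $\phi(r_0+\delta_0)(r_0+\delta_0)^{|E|-2}\leq r_0^{|E|-1}-\eta$ uniformly over $G \in G_\lambda$.

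As long as $\r(X(s))\leq r_0+\delta_0$ for $s\leq t$, the drift of each $\tilde r_{G,e}$ is dominated by a one-dimensional biased random walk with attracting point at most $r_0^{|E|-1}-\eta$ and inverse relaxation time $\Theta(n^2)$. Comparing with the ODE $u'=(|E|-1)(r_0^{|E|-1}-\eta - u)$ via a Gronwall argument shows that, for $c$ sufficiently large (depending on $L,\eta$), after $T=cn^2$ steps the cumulative deterministic drift forces every $\tilde r_{G,e}$ below $r_0^{|E|-1}-\eta/2$.

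For concentration, decompose $\tilde r_{G,e}(t)-\tilde r_{G,e}(0) = D_{G,e}(t) + M_{G,e}(t)$ with $D_{G,e}(t) = \sum_{s<t}\E[\Delta_s|\F_s]$ and $M_{G,e}$ a martingale. Since $N_G(X,e,e')\leq N_G(K_n,e,e') = O(n^{|V|-3})$, the increments satisfy $|\Delta_s|\leq C/n$; the per-step conditional variance is
\[
\E[\Delta_s^2\mid \F_s]\leq \frac{1}{\binom{n}{2}(2|E|\,n^{|V|-2})^2}\sum_{e'}N_G(X,e,e')^2 = O(n^{-3}),
\]
combining the pointwise bound with $\sum_{e'} N_G(X,e,e') = O(n^{|V|-2})$ from Lemma~\ref{lemma:sbg-count}(c). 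Thus the quadratic variation over $T=cn^2$ steps is $O(1/n)$, and Freedman's inequality gives $\P(\max_{t\leq T}|M_{G,e}(t)|>\eta/8)\leq e^{-\Omega(n)}$; a union bound over the $O(n^2)$ pairs $(G,e)$ preserves this estimate. Now set $\tau := \inf\{t:\r(X(t))>r_0+\delta_0\}$. On the good martingale event, the drift bound (valid for $t<\tau$) and $|M_{G,e}|\leq \eta/8$ together give $\tilde r_{G,e}(t)\leq r_0^{|E|-1}+\eta/4 < (r_0+\delta_0)^{|E|-1}$ for every $(G,e)$ and $t\leq T\wedge\tau$, contradicting $\r(X(\tau))>r_0+\delta_0$ and forcing $\tau>T$. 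Evaluating the drift bound at $t=T$ then yields $\tilde r_{G,e}(T)\leq r_0^{|E|-1}-\eta/4$ for every $(G,e)$, hence $\r(X(T))\leq r_0-\delta$ for some $\delta=\delta(\mu,\phi,L)>0$.

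The main obstacle is the interdependence of the $\tilde r_{G,e}$'s: each drift depends on the global maximum $\r(X(t))$, so the processes cannot be analyzed in isolation. The stopping-time bootstrap above---defining $\tau$ and showing $\tau>T$ on the martingale event---is what handles this coupling. A secondary technical point is the $O(n^{-3})$ per-step variance estimate; the crude bound $|\Delta_s|^2\leq C^2/n^2$ alone would give total quadratic variation only $O(1)$ over $cn^2$ steps, yielding Freedman bounds of order $\Theta(1)$ rather than the $e^{-\Omega(n)}$ required.
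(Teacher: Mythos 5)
Your proposal is correct and runs on the same engine as the paper's proof: the drift bound of Lemma~\ref{l:driftCalculation} applied to the normalized counts $N_G(X(t),e)/n^{|V|-2}$, exponential concentration resting on the key observation that the per-step increment is $O(n^{-2})$ except with probability $O(n^{-1})$ (whence per-step variance $O(n^{-3})$ and tails $e^{-\Omega(n)}$ over $cn^2$ steps — your Freedman bound is the packaged form of the paper's direct moment-generating-function computation with $\theta=cn$), and a bootstrap that handles the coupling of all the $r_G(\cdot,e)$ through the global maximum $\r$. The only genuine difference is organizational: where you use a Doob decomposition, a discrete Gronwall comparison with the ODE $u'=(|E|-1)(r_0^{|E|-1}-\eta-u)$, and a single stopping time $\tau$, the paper instead confines attention to the band $[\r(X(0))-2\delta,\,\r(X(0))+\delta]$ where the drift is uniformly $-\gamma/n^2$, union-bounds over all excursion events $B_{t_1,t_2}(e,G,\delta)$ in which some statistic rises by $\delta/2$ while all others behave, and then argues that each walk must dip below $\r(X(0))-2\delta$ and cannot climb back above $\r(X(0))-\delta$. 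Your Gronwall/stopping-time bookkeeping is arguably cleaner and avoids the $T^2$ union over time pairs, while the paper's excursion decomposition sidesteps having to track the drift when a statistic is far below its attracting point; both correctly exploit that the drift inequality only needs to hold for $t<\tau$ (respectively, on the events $D_t$).
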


\begin{proof}
  The lemma is proved by coupling each of the random variables $N_G(X(t),e)$ (one for each edge $e$ and graph $G$), with an independent biased random walk. 
  
  Choose $\eps, \delta>0$ so that for any $r\in [p^*+\mu,\bar{p}-\mu-\delta]$, \begin{equation}\label{e:r_range}(r-2\delta)^{|E|-1} > \phi(r+\delta)(r+\delta)^{|E|-2}+\eps\,. \end{equation}
  It follows by Lemma~\ref{l:driftCalculation} that if $r_G(X(t),e)\geq \r(X(0))-2\delta$ and $\r(X(t))\leq \r(X(0))+\delta$, then for sufficiently large $n$, $$\E\left[ \frac{N_G(X(t+1),e) - N_G(X(t),e)}{n^{|V|-2}}\right]\leq -\gamma / n^2$$ for some $\gamma>0$ depending only on $\phi$, $\delta$, and $\eps$.  
 Using this negative drift we bound the probability that any of the random variables $r_G(X(t),e)$ exceed $\r(X(0))+\delta$ before time $T$.

  Define the event $$A_t(\delta)=\bigcap_{e,G}\{r_G(X(t),e) \leq \r(X(0))+\delta \}\,,$$ and put 
 \begin{align*}&D_{t}(e,G,\delta)
 =A_t\cap \{\r(X(0))-2\delta \leq r_G(X(t),e) \leq \r(X(0))+\delta\} \,,
 \end{align*}
 and $$B_{t_1,t_2}(e,G,\delta)= \left(\bigcap_{t_1\leq t<t_2}D_t\right)\cap \{ r_G(X(t_2),e)-r_G(X(t_1),e)> \delta/2 \}\,.$$
  $B_{t_1,t_2}(e,G,\delta)$ is the event that all the edge statistics $r_{G'}(X(t),e')$ behave well starting at time $t_1$ up to and including time $t_2-1$, and the statistic $r_G(X(t),e)$ increases by at least $\delta/2$ in the time period from $t_1$ to $t_2$. 
  
 The event that some $r_G(X(\tau),e)$ exceeds $\r(X(0)) +\delta$ at some time $\tau$, $1\leq \tau \leq T$, is contained in the event $\bigcup_{e,G}\bigcup_{0\leq t_1<t_2\leq T}  B_{t_1,t_2} (e,G,\delta)$. The next claim bounds the probability of the bad event for a particular choice of edge $e$ and graph $G$ and the proof of this lemma follows.

\begin{claim}
  The probability of the event $\bigcup_{0\leq t_1 <  t_2 \leq T} B_{t_1,t_2}(e,G,\delta)$ is bounded as
  \begin{equation}
     \P\left(\bigcup_{0\leq t_1 < t_2\leq T} B_{t_1, t_2}(e,G,\delta)\right)\leq e^{-\Omega (n)}\,.
  \end{equation}
\end{claim}
\begin{proof}

For all $X$ we have $N_{G_i}(X,e,e') \leq N_{G_i}(K_n,e,e')$.  The term $N_{G_i}(K_n,e,e') $ is the number of graphs $G_i$ in the complete graph containing both $e$ and $e'$.  In the case that the two edges $e$ and $e'$ share a vertex they define 3 vertices, which leaves at most $|V_i|-3$ remaining vertices to be chosen.  It follows that $N_{G_i}(K_n,e,e')\leq O(n^{|V_i|-3})$ and so
\begin{equation}\label{e:smallDiff}
N_{G_i}(X,e,e')  = O(n^{-1}).
\end{equation}
Note that an adjacent edge $e'$ is only chosen with probability $O(n^{-1})$.  When $e$ and $e'$ do not share an edge then
\begin{equation}\label{e:smallDiff2}
N_{G_i}(X,e,e')  = O(n^{-2}).
\end{equation}

Although the claim concerns the random variable $r(G,e)$, we will work with the related random variable $$Y_t= \frac{N_G(X(t),e)}{n^{|V|-2}}\,.$$ 
The first step is to compute a bound on the moment generating function of $$S_{t_1,t_2}=\sum_{t=t_1+1}^{t_2}(Y_t-Y_{t-1}+\frac{\gamma}{2n^2})\ind(D_{t-1}(e,G,\delta))\,.$$ The random variable $S_{t_1,t_2}$ is the change in $Y_i$ from time $t_1$ to $t_2$ while all the edge statistics are within the appropriate interval, shifted by $\frac{\gamma}{2n^2}$ per time step. Clearly we have the containment \begin{equation}\label{e:eventContainmentS}B_{t_1,t_2}(e,G,\delta)\subseteq \{S_{t_1,t_2}\geq \delta/2\}\,.\end{equation} 
We have $$\E\left[e^{\theta S_{t_1,t_2}}\right]= \E\left[e^{\theta S_{t_1,t_2-1}} \E\left(e^{\theta(Y_{t_2}-Y_{t_2-1}+\frac{\gamma}{2n^2})\ind(D_{t-1}(e,G,\delta))}|\F_{t_2-1}\right)\right]\,.$$
 From Lemma~\ref{l:driftCalculation} and equation \eqref{e:r_range} it follows that $\E(Y_t-Y_{t-1}\ind(D_{t-1}(e,G,\delta))|\F_{t-1})\leq -\gamma/n^2$. Recalling that with probability $1-O(n^{-1})$ it holds that $|Y_t-Y_{t-1}|=O(n^{-2})$, and it always holds that $|Y_t-Y_{t-1}|=O(n^{-1})$, we have
\begin{align*}
 &E\left(e^{\theta(Y_{t_2}-Y_{t_2-1}+\frac{\gamma}{2n^2})\ind(D_{t-1}(e,G,\delta))}|\F_{t_2-1}\right) \\
 & =
 \sum_{k=0}^\infty \E \left[ \frac{\theta (Y_{t_2}-Y_{t_2-1}+\frac{\gamma}{2n^2})^k}{k!}\ind(D_{t-1}(e,G,\delta))^k \big| \F_{t-1}\right] \\ 
 &\leq
 1-\ind(D_{t-1}(e,G,\delta))\frac{\gamma \theta}{2n^2}\\
 & \quad +\ind(D_{t-1}(e,G,\delta))\theta^2\E\left[(Y_t-Y_{t-1})^2\sum_{k=2}^\infty \frac{(\theta(Y_t-Y_{t-1}+\frac{\gamma}{2n^2}))^{k-2}}{k!}  \big| \F_{t-1}\right]
 \\&= 1-\ind(D_{t-1}(e,G,\delta))\left(\frac{\gamma \theta}{2n^2}+ O\left(\frac{\theta^2}{n^3}\right)\right)\,.
\end{align*}
Thus, when we take $\theta=cn$ for sufficiently small $c$ we have that
\[
\E\left(e^{\theta(Y_{t_2}-Y_{t_2-1}+\frac{\gamma}{2n^2})\ind(D_{t-1}(e,G,\delta))}|\F_{t-1}\right) \leq 1
\]
and so 
\begin{align*}
  \E\left[e^{\theta S_{t_1,t_2}}\right] \leq \E\left[e^{\theta S_{t_1,t_2-1}}\right]\leq 1  \,,
\end{align*}
where the second inequality follows by iterating the argument leading to the first inequality.  We can choose $\alpha>0$ depending only on $L$ and $\delta$ such that for any graph in $\{G_\lambda\}$,
\[
\alpha <   \sup_{x\in [p^*,1]}\{ (x+\delta/2)^{|E|-1} - (x)^{|E|-1}  \}.
\]
This gives the estimate
\begin{align}
  \P(S_{t_1,t_2} \geq \alpha) &\leq e^{-c\alpha n}\E\left[e^{\theta(Y_t-Y_0)}\right]=  e^{-\Omega(n)}\,.
\end{align} 
and so
\begin{equation}\label{e:S_estimate}
\P( r_G(X(t_2),e)-r_G(X(t_1),e)> \delta/2 ) = e^{-\Omega(n)}.
\end{equation}

We may now apply \eqref{e:S_estimate} to equation \eqref{e:eventContainmentS}, resulting in
\begin{equation}
    \P\left(\bigcup_{0\leq t_1 \leq t \leq t_2\leq T} B_{t_1, t_2}(e,G,\delta)\right)\leq T^2 e^{-\Omega(n)}(1+o(n))=e^{-\Omega( n)}\,,
  \end{equation} which proves the claim.
\end{proof}

Next, we argue that if all of the random variables $r_G(X(t),e)$ remain below $\r+\delta$, then each random variable actually ends below $\r-\delta$ with exponentially high probability. 
We prove this by showing that each random walk actually reaches $\r-2\delta$, and then by the claim has exponentially small probability of increasing to $\r-\delta$. 
Suppose that for some $e,G$, $r_G(X(0),e)\geq \r-2\delta$.  Then for $T=cn^2$, 
\begin{align*}&\P(r_G(X(t),e)\geq \r-2\delta \text{ for } 1\leq t\leq T) \\ &\leq \P(r_G(X(t),e)\geq \r-2\delta \text{ for } 1\leq t\leq T, \cap_{1\leq t\leq T} A_t(\delta))+e^{-\Omega(n)}
\\ &\leq \P(r_G(X(t),e)\geq \r-2\delta \text{ for } 1\leq t\leq T, \cap_{1\leq t\leq T} D_t(e,G,\delta))+e^{-\Omega(n)}
\\ &\leq \P(S_{1,T}\geq -1 + \frac{\gamma c}{2})+e^{-\Omega(n)}\,,\end{align*} where the last step follows since each of the $T$ increments in $S_{1,T}$ contribute $\gamma/2n^2$ on the event $\cap_{1\leq t\leq T} D_t(e,G,\delta)$. Choosing $c\geq 3/\gamma$ and using the estimate on the deviation of $S_{t_1,t_2}$ \eqref{e:S_estimate} gives
$$\P(r_G(X(t),e)\geq \r-2\delta \text{ for } 1\leq t\leq T)\leq e^{-\Omega(n)}\,.$$
Finally, we have 
\begin{align*}&\P(r_G(X(T),e)\geq \r-\delta)\\ &\leq \P(r_G(X(T),e) \geq \r-\delta, r_G(X(t),e)< \r-2\delta \text{ for some } t\in [1,T])+e^{-\Omega(n)}
\\ & \leq \P(\cup_{1\leq t_1\leq T} B_{t_1,T}(e,G,\delta))+e^{-\Omega(n)} \leq e^{-\Omega(n)}\,.
\end{align*}
The union bound on probabilities applied over the set of edges $e$ and graphs $G$ completes the proof of Lemma~\ref{l:randomWalk}.
\end{proof}

The following lemmas follow immediately from iterating Lemma \ref{l:randomWalk}.

\begin{lemma}\label{l:highTempBurnIn}
In the high temperature phase for any $\eps>0$ there is $c>0$ such that for any initial configuration $X(0) = x$,  when $t\geq cn^2$ we have
\begin{align*}
\P(\r(X(t)) \geq p^*+\eps|X(0) = x) \leq e^{-\Omega(n)},\\
\P(\rmin(X(t)) \leq p^*-\eps|X(0) = x) \leq e^{-\Omega(n)}.
\end{align*}
\end{lemma}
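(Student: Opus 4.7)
The plan is to iterate Lemma~\ref{l:randomWalk} a constant (in $n$) number of times. In the high-temperature regime, $p^*$ is by definition the unique fixed point of $\phi(p)=p$, so when invoking Lemma~\ref{l:randomWalk} the quantity $\bar p$ may be taken equal to $1$. Setting $\mu=\eps$ yields constants $\delta,c_0>0$ such that starting from any $X(s)$ with $\r(X(s))\in[p^*+\eps,1-\eps]$, after $c_0n^2$ further steps $\r$ decreases by at least $\delta$ with failure probability $e^{-\Omega(n)}$.

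I would iterate this bound over $K=\lceil (1-p^*-\eps)/\delta\rceil$ successive blocks of length $c_0n^2$; since $K$ is a constant depending only on $\eps$ and $\phi$, a union bound over the blocks gives $\r(X(Kc_0n^2))\leq p^*+\eps$ with probability $1-e^{-\Omega(n)}$, and we set $c=Kc_0$. For $t>cn^2$, the same random-walk argument that proves the claim inside Lemma~\ref{l:randomWalk} bounds the probability that $\r$ subsequently exceeds $p^*+\eps$ by $e^{-\Omega(n)}$, so the conclusion persists. For the lower bound on $\rmin$, the symmetric version of Lemma~\ref{l:randomWalk} applies: Lemma~\ref{l:driftCalculation} shows that on $[0,p^*-\mu]$ the drift $-r^{|E|-1}+\phi(r)r^{|E|-2}$ is positive (since $\phi(r)>r$ there in the high-temperature phase), so coupling each $N_G(X(t),e)$ with a biased random walk of positive drift pushes $\rmin$ up by $\delta$ per block with the same exponentially small failure probability; iterating as before completes the proof.

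The main obstacle I anticipate is a mild boundary technicality: Lemma~\ref{l:randomWalk} as stated requires $\r(X(0))\leq \bar p-\mu=1-\mu$, which excludes initial configurations with $\r$ very close to $1$ (such as the complete graph). This is handled by a short separate argument using that $\phi(1)<1$ in the high-temperature phase, so the drift of each $N_G(X,e)/n^{|V|-2}$ is strictly negative at $r=1$; running the chain for a preliminary $O(n^2)$ steps then forces $\r$ below $1-\mu$ with exponentially high probability, after which the iteration above begins. A symmetric remark near $\rmin=0$ handles the lower bound. Apart from this, the argument is a routine constant-time iteration of the single-block estimate already proved.
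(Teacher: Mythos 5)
Your proposal is correct and matches the paper's approach: the paper itself disposes of this lemma with the single line that it ``follows immediately from iterating Lemma~\ref{l:randomWalk},'' and your constant number $K=O(1/\delta)$ of blocks of length $c_0n^2$, the symmetric positive-drift argument for $\rmin$, and the observation that in the high-temperature phase $\bar p=1$ so the drift stays negative all the way up to $r=1$ (handling the complete-graph start) are exactly the details being elided. One small simplification for the persistence at general $t\geq cn^2$: since the burn-in estimate is uniform over the initial configuration, you can simply apply the $K$-block argument to the window $[t-Kc_0n^2,\,t]$ rather than tracking the chain forward from time $cn^2$.
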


\begin{lemma}\label{l:lowTempBurnIn}
In the low temperature phase suppose that $p^*$ is a solution to $p^=\phi(p^)$ and $\phi'(p^*)<1$.  There exists an $\epsilon>0$ such that if for some initial configuration $X(0)$ we have that $\r(X(0))\leq p^*+\eps$ and $\rmin(X(0))\geq p^*-\eps$ then for some $\alpha>0$
\begin{align*}
\P\left(\sup_{0<t<e^{\alpha n}} \r(X(t)) \geq p^* + 2\eps\right)  &\leq e^{-\Omega(n)},\\
\P\left(\inf_{0<t<e^{\alpha n}} \r(X(t)) \leq p^* - 2\eps\right)  &\leq e^{-\Omega(n)}.
\end{align*}
\end{lemma}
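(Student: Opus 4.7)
The plan is to iterate the random-walk argument from the proof of Lemma~\ref{l:randomWalk} across $N=e^{\alpha n}/(cn^2)$ consecutive epochs of length $cn^2$, and then union-bound. The key observation is that the per-epoch failure probability inside the proof of Lemma~\ref{l:randomWalk} is $e^{-c_0 n}$ for a constant $c_0>0$ depending only on $\phi$, $L$, $\delta$; choosing $\alpha<c_0/2$ then makes $N\cdot e^{-c_0 n}=e^{-\Omega(n)}$, so the resulting exponential-time bound survives.

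For the upper bound on $\r$, using $\phi'(p^*)<1$ and smoothness of $\phi$, first pick $\eps>0$ so small that (i) $[p^*-3\eps,p^*+3\eps]$ contains no other solution of $\phi(p)=p$, (ii) $\phi(r)-r<-\eta$ uniformly on $[p^*+\eps/2,p^*+3\eps]$, and (iii) $\phi(r)-r>\eta$ uniformly on $[p^*-3\eps,p^*-\eps/2]$, for some $\eta>0$. I would then induct over epochs on the invariant $\r(X(kcn^2))\leq p^*+5\eps/4$. If $\r(X(kcn^2))\geq p^*+\eps/2$, Lemma~\ref{l:randomWalk} applies with $\mu=\eps/2$ and $\bar{p}\geq p^*+3\eps$, giving both a decrease of $\delta$ after the epoch and, via the internal Claim, a supremum bound $\r(X(t))\leq \r(X(kcn^2))+\delta/2\leq p^*+2\eps$ throughout the epoch. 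If instead $\r(X(kcn^2))<p^*+\eps/2$, then $\r<p^*+\eps/2$ up to the first epoch-hitting time $\sigma$ of $[p^*+\eps/2,p^*+3\eps]$, and from $\sigma$ onward the same Claim confines $\r$ below $p^*+\eps/2+\delta/2<p^*+2\eps$. Taking $\delta<\eps/2$ gives the first bound after union-bounding over epochs.

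For the lower bound on $\rmin$, the argument is symmetric once one establishes the matching lower-bound drift estimate
\[
\E\!\left[\frac{N_G(X(1),e)-N_G(X(0),e)}{n^{|V|-2}}\right]\geq (1-o(1))\frac{2}{\binom{n}{2}}|E|(|E|-1)\bigl[-\rmin^{|E|-1}+\phi(\rmin)\rmin^{|E|-2}\bigr],
\]
which follows by using monotonicity of $x\mapsto e^x/(1+e^x)$ together with $\beta_i\geq 0$ to lower-bound the edge-inclusion probability by $\phi(\rmin)(1-o(1))$, and by mirror-image versions of the counting identities in Lemmas~\ref{lemma:sbg-count}--\ref{l:countingSubgraphsSum} to lower-bound the positive sum. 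Condition~(iii) then supplies a positive drift pushing $\rmin$ upward toward $p^*$, and the mirror-image epoch iteration yields the second bound. The main obstacle is precisely this symmetric drift estimate: Lemma~\ref{l:driftCalculation} is one-sided, so obtaining a matching lower bound requires verifying that for $r_G=\rmin$ in the narrow range $|r-p^*|\leq 2\eps$, the gap between $\r$ and $\rmin$ (which is $O(\eps)$) can be absorbed by the buffer $\eta$ provided by (iii) without flipping the sign of the drift; a secondary, more routine issue is calibrating $\alpha$ against the exponent in the martingale tail bound inside Lemma~\ref{l:randomWalk}'s proof, which one must verify is linear in $n$ with a constant independent of $\alpha$.
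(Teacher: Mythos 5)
Your proposal is correct and matches the paper's approach: the paper's entire proof of this lemma is the single sentence that it ``follows immediately from iterating Lemma~\ref{l:randomWalk}'', and your epoch-by-epoch iteration of that lemma (and of its internal Claim for the in-epoch supremum control) with a union bound over the $e^{\alpha n}/(cn^2)$ epochs, choosing $\alpha$ smaller than the per-epoch exponent, is exactly the intended argument. Your observation that Lemma~\ref{l:driftCalculation} is only an upper bound, so that the $\rmin$ half needs the mirror-image estimate $\E\left[\frac{N_G(X(1),e)-N_G(X(0),e)}{n^{|V|-2}}\right]\geq(1-o(1))\frac{2}{\binom{n}{2}}|E|(|E|-1)\left[-r_G(X,e)^{|E|-1}+\phi(\rmin)\,\rmin^{|E|-2}\right]$ (valid because $\beta_i\geq 0$ for $i\geq 2$ makes the inclusion probability and the subgraph counts monotone, so both can be bounded below via $\rmin$), is a genuine detail the paper leaves implicit, and your handling of it is right.
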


\subsection{Path coupling}
\begin{lemma}\label{l:pathCoupling}
Let $p^*\in [0,1]$ be a solution of the equation $\varphi(p)=p$ and suppose $0<\varphi'(p^*)<1$.  There exists $\epsilon,\delta>0$ sufficiently small and such that the following holds.  Suppose that $X^+(0)\geq X^-(0)$ are two configurations that differ at exactly one edge $e$.  Suppose further that for all graphs $G$ with at most  $L$ vertices and all edges $e'$
\begin{equation}\label{e:concentrationOfR}
\left |r(G,e') - p^* \right| < \epsilon.
\end{equation}
Then for sufficiently large $n$ a single step of the Glauber dynamics can be coupled so that
\[
E d_H(X^+(1), X^-(1)) \leq 1- \delta n^{-2}.
\]
\end{lemma}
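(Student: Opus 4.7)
The plan is to use the monotone (maximal) coupling in a single step and show a contraction in expected Hamming distance of size $\Theta(1/n^2)$. Couple the two chains so that the same edge $f$ is chosen uniformly at random and the Bernoulli update at $f$ is realized by a common uniform variable; since $X^+(0) \geq X^-(0)$ this enforces $X^+(t) \geq X^-(t)$. If $f = e$ (probability $1/\binom{n}{2}$), then because $\partial_e H(X)$ depends only on edges other than $e$ we have $\partial_e H(X^+) = \partial_e H(X^-)$; the two update probabilities agree, the coupling forces both chains to the same value at $e$, and the Hamming distance drops to $0$. If $f \neq e$, the chains still differ at $e$, and the new-disagreement probability at $f$ is
\[
p_f \,:=\, \sigma(\partial_f H(X^+)) - \sigma(\partial_f H(X^-)),\qquad \sigma(x) := \frac{e^x}{1+e^x},
\]
which is nonnegative by monotonicity. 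Taking expectations,
\[
\E[d_H(X^+(1), X^-(1))] \,=\, 1 - \frac{1}{\binom{n}{2}} + \frac{1}{\binom{n}{2}} \sum_{f \neq e} p_f,
\]
so the task reduces to showing $\sum_{f \neq e} p_f \leq 1 - c$ for some $c > 0$.

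The heart of the proof is to show $\sum_{f \neq e} p_f \approx \varphi'(p^*)$ in the concentration regime. From the hypothesis $|r(G,e')-p^*|<\epsilon$ for all $G \in \{G_\lambda\}$, the definition of $r$ gives $N_{G_i}(X^\pm,e')/n^{|V_i|-2} = 2|E_i|(p^*)^{|E_i|-1} + O(\epsilon)$ uniformly in $e'$, so $\partial_{e'} H(X^\pm) = \Psi(p^*) + O(\epsilon)$. The mean-value theorem applied to $\sigma$ (with $\sigma'(\Psi(p^*)) = p^*(1-p^*)$) gives $p_f \leq \bigl(p^*(1-p^*) + O(\epsilon)\bigr)\bigl(\partial_f H(X^+) - \partial_f H(X^-)\bigr)$. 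The difference is the discrete mixed partial $\partial_e \partial_f H(X^-) = \sum_i \beta_i N_{G_i}(X^-,e,f)/n^{|V_i|-2}$, since only Hamiltonian terms containing both $e$ and $f$ survive. Summing over $f \neq e$ and applying Lemma~\ref{l:countingSubgraphsSum} gives $\sum_{f \neq e} N_{G_i}(X^-,e,f) = \sum_{\alpha \in E(G_i)} N_{(G_i)_\alpha}(X^-,e)$; the concentration hypothesis on each $(G_i)_\alpha \in \{G_\lambda\}$ together with Lemma~\ref{lemma:sbg-count} then yields
\[
\sum_{f \neq e} \frac{N_{G_i}(X^-,e,f)}{n^{|V_i|-2}} = 2|E_i|(|E_i|-1)(p^*)^{|E_i|-2} + O(\epsilon),
\]
and summing over $i$ gives $\sum_{f \neq e}(\partial_f H(X^+) - \partial_f H(X^-)) = \Psi'(p^*) + O(\epsilon)$.

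Combining with the identity $\varphi'(p^*) = p^*(1-p^*)\Psi'(p^*)$, which follows from $\varphi = \sigma \circ \Psi$, yields $\sum_{f \neq e} p_f \leq \varphi'(p^*) + O(\epsilon)$. Since $\varphi'(p^*) < 1$, choosing $\epsilon$ small enough makes $\sum_{f \neq e} p_f \leq 1 - c$ for some $c > 0$, giving $\E[d_H(X^+(1),X^-(1))] \leq 1 - \delta/n^2$ with $\delta := c/4$, say. The main technical obstacle is tracking the $O(\epsilon)$ and $O(1/n)$ errors uniformly in $f$; in particular, individual $p_f$'s have different scalings for $f$ adjacent to $e$ (size $O(1/n)$, with $O(n)$ such $f$'s) versus non-adjacent (size $O(1/n^2)$, with $O(n^2)$ such $f$'s), and Lemma~\ref{l:countingSubgraphsSum} is the combinatorial identity that makes the aggregate sum collapse cleanly to $\Psi'(p^*)$ independently of this distinction.
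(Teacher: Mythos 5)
Your proposal is correct and follows essentially the same route as the paper's proof: the monotone coupling, the reduction to bounding $\sum_{f\neq e}\bigl(\sigma(\partial_f H(X^+))-\sigma(\partial_f H(X^-))\bigr)$, the use of Lemma~\ref{l:countingSubgraphsSum} together with Lemma~\ref{lemma:sbg-count} and the concentration hypothesis to collapse the sum of mixed partials to $\Psi'(p^*)+O(\epsilon)$, and the chain-rule identity $\varphi'(p^*)=\sigma'(\Psi(p^*))\Psi'(p^*)<1$. The only cosmetic difference is that you invoke the mean value theorem where the paper uses a first-order Taylor bound with an explicit $O(h^2)$ remainder controlled by $\partial_e\partial_f H=O(n^{-1})$.
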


\begin{proof}

We take the standard monotone coupling.  Suppose that an edge $e'\neq e$ is chosen to be updated by the Markov chain.  Then
\begin{equation}\label{e:probAddEdge}
P(X^\pm_{e'}(1) = 1) = \frac{\exp(\partial_{e'} H(X^\pm(0)))}{1 + \exp(\partial_{e'} H(X^\pm(0)))}.
\end{equation}
Since
\[
\partial_{e'} H(X^\pm(0)) = \sum_{i=1}^s \frac{ \beta_i N_{G_i}(X^\pm(0),e') } {n^{|V_i|-2}}
\]
by Lemma \ref{lemma:sbg-count}  and equation \eqref{e:concentrationOfR} we have that for large enough $n$,
\begin{equation}\label{e:approxPsi}
\partial_{e'} H(X^\pm(0)) \leq  \sum_{i=1}^s \frac{ \beta_i (p^* + \epsilon)^{|V_i|-2}N_{G_i}(K_n,e') } {n^{|V_i|-2}} =  \Psi(p^*+ \epsilon).
\end{equation}
Similarly 
\[
0 \leq (1-o(1))\Psi(p^*-\epsilon) \leq \partial_{e'} H(X^\pm(0))
\]
and so it follows that for any $\epsilon'>0$ that for large enough $n$ and for small enough $\epsilon$ we have that
\begin{equation}\label{e:approxDeriv}
\left .  \frac{d} {dx}\frac{e^x}{1+e^x} \right |_{\partial_{e'} H(X^+(0)) } \leq    (1+\epsilon')   \left .\frac{d} {dx}\frac{e^x}{1+e^x} \right |_{\Psi(p^*) }.
\end{equation}

We now bound the sum of the  $\partial_{e}  \partial_{e'} H(X^+(0))$ terms
\begin{align*}
\sum_{e' \neq e} \partial_{e}  \partial_{e'} H(X^+(0)) &=  \sum_{e' \neq e}\sum_{i=1}^s \frac{ \beta_i N_{G_i}(X^+(0),e,e') } {n^{|V_i|-2}} \\
&=  \sum_{i=1}^s \frac{ \beta_i \sum_{\al\in E(G_i)\atop \al\neq e} N_{(G_i)_\al}(X^+(0),e)  } {n^{|V_i|-2}} \\
&\leq  \sum_{i=1}^s \frac{ \beta_i \sum_{\al\in E(G_i)\atop \al\neq e} (p^* + \epsilon)^{|E_i|-2}N_{(G_i)_\alpha}(K_n,e) } {n^{|V_i|-2}} \\
&= \sum_{i=1}^s   \sum_{e' \neq e} \frac{ \beta_i (p^* + \epsilon)^{|E_i|-2} N_{G_i}(K_n,e,e') } {n^{|V_i|-2}}\,,
\end{align*}
where the second and fourth lines follow from Lemma \ref{l:countingSubgraphsSum} and the inequality follows from Equation \eqref{e:concentrationOfR}.  By Lemma \ref{lemma:sbg-count} we have that
\begin{equation}\label{e:DoubleDerivBound}
\sum_{e' \neq e} \partial_{e}  \partial_{e'} H(X^+(0)) \leq (1+o(1)) \sum_{i=1}^s 2  |E_i| (|E_i|-1) (p^* + \epsilon)^{|E_i|-2} = (1+o(1))\Psi'(p^* + \epsilon)\,.
\end{equation}


By Taylor series for small $h$ we have that $\frac { e^{x+h} } {1+e^{x+h}} - \frac { e^x } {1+e^x} \leq \left .  \frac{d} {dx}\frac{e^x}{1+e^x} \right |_{x} (h + O(h^2))$ and so using Equation \eqref{e:probAddEdge},
\begin{align}\label{e:probDiff}
P(X^+_{e'}(1) = 1) - P(X^-_{e'}(1) = 1) & = \left. \frac{d} {dx}\frac{e^x}{1+e^x} \right |_{\partial_{e'} H(X^+(0)) }  
\cdot   \left( \partial_{e}  \partial_{e'} H(X^+(0))  +  O((\partial_{e}  \partial_{e'} H(X^+(0))^2) \right)\\
&\leq (1+\epsilon') (1+o(1))\partial_{e}  \partial_{e'} H(X^+(0))    \left .\frac{d} {dx}\frac{e^x}{1+e^x} \right |_{\Psi(p) } \,,
\end{align}
using equations \eqref{e:approxDeriv} and the fact that by equation  \eqref{e:smallDiff} we have that $\partial_{e}  \partial_{e'} H(X^+(0)) =O(n^{-1})$.

Each edge $e'$ has probability ${n\choose 2}^{-1}$ of being updated and if edge $e$ is chosen to be updated then the number of disagreements is 0.  It follows by equations \eqref{e:probDiff} and \eqref{e:DoubleDerivBound} that for any $\epsilon''>0$
\begin{align*}
E d_H(X^+(1), X^-(1)) &\leq 1 - {n\choose 2}^{-1}\left[ 1 -  \sum_{e'\neq e}  (1+\epsilon') (1+o(1))\partial_{e}  \partial_{e'} H(X^+(0))    \left .\frac{d} {dx}\frac{e^x}{1+e^x} \right |_{\Psi(p) }   \right]\\
& \leq 1 - {n\choose 2}^{-1}\left[ 1 -  (1+\epsilon') (1+o(1))\Psi'(p^* + \epsilon)  \left .\frac{d} {dx}\frac{e^x}{1+e^x} \right |_{\Psi(p) }   \right]\\
&\leq  1 - {n\choose 2}^{-1}\left[ 1 - (1+\epsilon'') (1+o(1))\varphi'(p^*)   \right]
\end{align*}
provided that $\epsilon,\epsilon'$ are sufficiently small.  The result follows, since $\varphi'(p^*)<1$.

\end{proof}

{\bf Proof of Theorem \ref{theo:high}}

We begin by proving the high temperature phase using a coupling argument.  Let $X^+(t)$ and $X^-(0)$ be two copies of the Markov chain started from the complete and empty configurations, respectively, and coupled using the monotone coupling. Since this is a monotone system, it follows that if $P(X^+(t)\neq X^-(t))<e^{-1}$, then $t$ is an upper bound on the mixing time.  The function $\varphi$ satisfies the hypothesis of Lemma \ref{l:pathCoupling}, so choose $\epsilon$ and $\delta$ according to the lemma.  Let property $\mathcal{A}_t$ be the event that for all graphs $G$ with at most  $L$ vertices and all edges $e$
\begin{equation}\label{e:concentrationOfR2}
\left |r(G,e) - p^* \right| < \epsilon
\end{equation}
for both $X^+(t)$ and $X^-(0)$.  By Lemma \ref{l:highTempBurnIn} we have that if $t\geq cn^2$, then $P(\mathcal{A}_t) \geq1-e^{-\alpha n}$.

Since the subgraph counts $N_G(X,e)$ are monotone in $X$, if $X^+(t)$ and $X^-(t)$ both satisfy equation \eqref{e:concentrationOfR2}, then there exists a sequence of configurations $X^-(t) = X^0 \leq X^1 \leq \ldots \leq X^{d}=X^+(t)$, where $d=d_H(X^+(t),X^-(t))$, each pair $X^i,X^{i+1}$ differ at exactly one edge, and each $X^i$ satisfies equation \eqref{e:concentrationOfR2}.  Such a sequence is constructed by adding one edge at a time to $X^-(t)$ until $X^+(t)$ is reached.   Applying path coupling to this sequence, we have that by Lemma \ref{l:pathCoupling}
\[
E \left[ d_H(X^+(t+1),X^-(t+1)) | X^+(t),X^-(t+1) ,\mathcal{A}_t \right] \leq (1- \delta n^{-2}) d_H(X^+(t),X^-(t)).
\]
Since $d_H(X^+(t),X^-(t))\leq {n\choose 2}$, we have the inequality
\begin{align*}
E \left[ d_H(X^+(t+1),X^-(t+1)) \right]  &\leq (1- \delta n^{-2}) E\left[ d_H(X^+(t),X^-(t)) | \mathcal{A}_t  \right]P(\mathcal{A}_t) + {n\choose 2}(1-P(\mathcal{A}_t))\\
&\leq (1- \delta n^{-2}) E\left[ d_H(X^+(t),X^-(t)) \right]+ {n\choose 2}e^{-\alpha n}\,.
\end{align*}
Iterating this equation, we get that for $t>C'n^2$,
\begin{align*}
E\left[ d_H(X^+(t),X^-(t)) \right] &\leq (1- \delta n^{-2})^{t-Cn^2}{n\choose 2} + e^{-\alpha n}{n\choose 2}\sum_{j=C'n^2}^t (1- \delta n^{-2})^{t-j}\\
&\leq \exp(- \delta n^{-2}(t-Cn^2))n^2 + e^{-\alpha n} \frac1{\delta}{n\choose 2}n^2.
\end{align*}
Then for any $\epsilon'>0$, when $t> \frac{2 + \epsilon'}{\delta} n^2\log n$ we have that for large enough $n$,
\[
E\left[ d_H(X^+(t),X^-(t)) \right] = o(1)
\,. \]
It follows by Markov's inequality that $P(X^+(t) \neq X^-(t)) =o(1)$, which establishes that the mixing time is bounded by $ \frac{2 + \epsilon'}{\delta} n^2\log n$.


\subsection{Slow mixing for local Markov chains in low-temperature regime}

We will use the following conductance result, which is taken from \cite{dyer-frieze-jerrum02} (Claim 2.3):
\begin{claim}\label{claim:ConductanceBound}
  Let $\mathcal{M}$ be a Markov chain with state space $\Omega$, transition matrix $P$, and stationary distribution $\pi$. Let $A\subset \Omega$ be a set of states such that $\pi(A)\leq \frac{1}{2}$, and $B\subset \Omega$ be a set of states that form a ``barrier" in the sense that $P_{ij}=0$    whenever $i\in A\setminus B$ and $j\in A^c\setminus B$. Then the mixing time of $\mathcal{M}$ is at least $\pi(A)/8\pi(B)$.
\end{claim}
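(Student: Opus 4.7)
The claim is a standard conductance-style lower bound on mixing time. The plan has two parts: first, bound the ergodic flow $Q(A,A^c):=\sum_{i\in A,\,j\in A^c}\pi(i)P_{ij}$ by $\pi(B)$ using the barrier hypothesis; second, run the chain from the initial distribution $\mu_0=\pi|_A/\pi(A)$ and show that mass leaks from $A$ to $A^c$ at rate at most $Q(A,A^c)/\pi(A)$ per step, so that the total variation distance to $\pi$ stays above $e^{-1}$ for at least $\pi(A)/(8\pi(B))$ steps.

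For the first part, split the flow sum based on whether the source $i\in A$ lies in $B$. The contribution from $i\in A\cap B$ is bounded by $\sum_{i\in A\cap B}\pi(i)\leq \pi(A\cap B)$ since $\sum_{j}P_{ij}=1$. For $i\in A\setminus B$, the barrier hypothesis forces any $j$ with $P_{ij}>0$ into $A^c\cap B$; swapping the order of summation and applying stationarity $\sum_i \pi(i)P_{ij}=\pi(j)$ bounds this contribution by $\sum_{j\in A^c\cap B}\pi(j)=\pi(A^c\cap B)$. Adding yields $Q(A,A^c)\leq \pi(A\cap B)+\pi(A^c\cap B)=\pi(B)$.

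For the second part, set $\mu_t=P^t\mu_0$ and observe that $\mu_0(x)\leq \pi(x)/\pi(A)$ pointwise, a property preserved under applying $P$ because $(P\nu)(j)=\sum_i\nu(i)P_{ij}\leq (1/\pi(A))\sum_i\pi(i)P_{ij}=\pi(j)/\pi(A)$ whenever $\nu\leq \pi/\pi(A)$. Hence the net flow $\mu_{t+1}(A^c)-\mu_t(A^c)$ is at most the gross flow $\sum_{i\in A}\mu_t(i)\sum_{j\in A^c}P_{ij}\leq Q(A,A^c)/\pi(A)\leq \pi(B)/\pi(A)$. Iterating gives $\mu_t(A^c)\leq t\pi(B)/\pi(A)$, so $\|\mu_t-\pi\|_{TV}\geq \pi(A^c)-\mu_t(A^c)\geq 1/2-t\pi(B)/\pi(A)$. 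Since $\mu_0$ is a convex combination of point masses in $A$, the worst-case TV distance from a single starting state is at least this average, so forcing it below $e^{-1}$ requires $t\geq (1/2-e^{-1})\pi(A)/\pi(B)>\pi(A)/(8\pi(B))$.

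The main obstacle is the first step: without reversibility, one cannot equate the oppositely directed flows $\pi(i)P_{ij}$ and $\pi(j)P_{ji}$, so the bound on $Q(A,A^c)$ must be extracted by decomposing the sum according to which side of the barrier the source of the transition lies on, and invoking plain stationarity rather than detailed balance. Once $Q(A,A^c)\leq \pi(B)$ is secured, the second step is a routine pointwise-domination argument and the stated mixing-time bound emerges from elementary arithmetic.
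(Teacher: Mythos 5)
Your proof is correct. The paper does not prove this claim at all—it is imported verbatim from Dyer--Frieze--Jerrum (their Claim 2.3)—and your argument is a correct, self-contained reconstruction of the standard conductance lower bound: the barrier hypothesis plus stationarity gives $Q(A,A^c)\le\pi(B)$, and the leakage bound from the initial distribution $\pi|_A/\pi(A)$ yields $\tau_{mix}\ge(1/2-e^{-1})\pi(A)/\pi(B)>\pi(A)/(8\pi(B))$.
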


Using this result we prove slow mixing for any local Markov chain.


{\bf Proof of Theorem \ref{theo:low}}

\begin{proof}
Suppose $p_1$ and $p_2$ are solutions of the equation $\varphi(p)=p$ with $\varphi'(p_1)<1,\varphi'(p_2)<1$, and choose $\eps>0$ sufficiently small so that $\varphi(p)<p$ for $p\in (p_i,p_i+3\eps]$ and $\varphi(p)>p$ for $p\in [p_i-3\eps,p_i)$, for $i=1,2$. Let $$A_i=\{X:\r(X)\leq p_i+\eps \text{ and } \rmin(X)\geq p_i-\eps\},\quad i=1,2\,,$$ and suppose the set $A_1$ has smaller probability (switching the labels $p_1$ and $p_2$ if necessary), so $\pi(A_1)\leq \frac{1}{2}$. We note that for large enough $n$,  $\pi(A_i)>0$ since with high probability an Erd\"os-Reny\'i random graph $G(n,p_i)$ is in $A_i$.
In the remainder of the proof we will omit the subscript, i.e. let $A=A_1$ and $p=p_i$. Now, clearly the set
$$B=\{X:p+\eps<\r(X)\leq p+2\eps \text{ or } p-\eps>\rmin(X)\geq p-2\eps\}$$ forms a barrier (for sufficiently large $n$) between the sets $A$ and $A^c$ for any Markov chain that updates only $o(n)$ edges per time-step, since each edge update can change each of $\r$ and $\rmin$ by at most $O\left(\frac{1}{n}\right)$. 

It remains only to bound the relative probabilities of the sets $A$ and $B$. Let $C=A^c\setminus B$, and let $t=cn^2$ such that Lemma~\ref{l:randomWalk} holds. Then 
\begin{equation}\label{e:transitionBound1}
  \P(X(t)\in C| X(0)\in B)=e^{-\Omega(n)}
\end{equation}
and
\begin{equation}\label{e:transitionBound2}
  \P(X(t)\in B |X(0) \in A\cup B)=e^{-\Omega(n)}\,.
\end{equation}
Let the configuration $X(0)$ be drawn according to the Gibbs measure $\pi=p_n$ defined in Equation \eqref{eqn:gibbs}, and let $X(t)$ be the configuration resulting after $t$ steps of the Glauber dynamics. Because the Glauber dynamics has stationary distribution $\pi$, $X(t)$ has the same distribution as $X(0)$. By the reversibility of the Glauber dynamics and the estimate \eqref{e:transitionBound1} we have
\begin{equation}\begin{split}\label{e:slowMixing1}
  \P(X(t)\in B,X(0)\in C)&= \P(X(t)\in C, X(0)\in B)\\ & = \P(X(t)\in C| X(0)\in B)P(X(0)\in B) \\ &=e^{-\Omega(n)} \P(X(0)\in B)\,.
\end{split}\end{equation}
Similarly, using \eqref{e:transitionBound2},
\begin{equation}\begin{split}\label{e:slowMixing2}
  \P(X(t)\in B, X(0)\in A\cup B) &= \P(X(t)\in B |X(0) \in A\cup B) \P(X(0) \in A\cup B) \\ & \leq e^{-\Omega(n)} \P(X(0) \in A\cup B) \\ & =  e^{-\Omega(n)} (\P(X(0)\in A)+\P(X(0)\in B))\,.
\end{split}\end{equation}
Combining \eqref{e:slowMixing1} and \eqref{e:slowMixing2}, we have
\begin{equation}
  \begin{split}
    \pi(B) &= \P(X(t)\in B) \\ &= \P(X(t)\in B,X(0)\in C) + \P(X(t)\in B, X(0)\in A\cup B)
   \\& \leq e^{-\Omega(n)} (\P(X(0)\in A)+2\P(X(0)\in B))
     \\&=  e^{-\Omega(n)} (\pi(A)+2\pi(B))\,,
  \end{split}
\end{equation}
which, upon rearranging, gives
\begin{equation}
  \pi(B)\leq \frac{e^{-\Omega(n)}}{1-2e^{-\Omega(n)}}\pi(A)\,.
\end{equation}
Together with Claim~\ref{claim:ConductanceBound} this completes the proof.
\end{proof}

\section{Asymptotic independence of edges and weak pseudo-randomness}

Our burn in proof in the high temperature regime shows that with high probability all the $\rGe$ are close to $p^*$, the fixed point of $\phi(p)=p$.  A consequence is that for any collection of edges $e_1,\ldots,e_j$ the events $x_{e_i}$ are asymptotically independent and distributed as $\hbox{Bernoulli}(p^*)$. A consequence of the asymptotic independence of the edges is that with high probability a graph sample from the exponential random graph distribution is \emph{weakly pseudo-random}, as defined in \cite{KrSu}. As such, the exponential random graph model is extremely similar to the basic Erd\H{o}s-R\'enyi  random graph.  Since exponential random graphs were introduced to model the phenomenon of increased numbers of small subgraphs like triangles, this result proves that the model fails in it's stated goal.

\begin{Theorem} Let $X$ be drawn from the exponential random graph distribution in the high temperature phase. 
  Let $e_1,\dots,e_k$ be an arbitrary collection of edges with associated indicator random variables $x_{e_i}=\ind(e_i\in X)$. Then for any $\eps>0$, there is an $n$ such that for all $(a_1,\dots,a_k)\in \{0,1\}^k$ the random variables $x_{e_1},\dots,x_{e_k}$ satisfy 
  $$\left|\P(x_1=a_1,\dots,x_k=a_k)-(p^*)^{\sum a_i}(1-p^*)^{k-\sum a_i}\right|\leq \frac{\eps}{n^{|V|}}\,.$$
  Thus, the random variables  $x_{e_1},\dots,x_{e_k}$ are asymptotically independent.
\end{Theorem}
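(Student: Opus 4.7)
The plan is to combine concentration of the Gibbs measure on the good set $\mathbf{G}$ (inherited from the burn-in analysis via stationarity) with the observation that the second-order cross-interactions between distinct edges in the Hamiltonian are only $O(1/n)$. Together these force the conditional law of a finite collection of edges, given the rest of the graph, to factorise as a product of Bernoulli$(p^*)$ laws, after which the marginal statement follows by averaging.

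First I would establish that the Gibbs measure itself is concentrated on the good set. For $\delta>0$ let $\mathbf{G}_\delta=\{X:\max_{G,e}|r_G(X,e)-p^*|<\delta\}$. Since $\pi=p_n$ is stationary for the Glauber dynamics, if $X(0)\sim\pi$ then $X(T)\sim\pi$ for $T=cn^2$; Lemma~\ref{l:highTempBurnIn} applied to any initial state gives $\P(X(T)\notin\mathbf{G}_\delta\mid X(0)=x)\leq e^{-\Omega(n)}$, and integrating against $\pi$ yields $\pi(\mathbf{G}_\delta^c)\leq e^{-\Omega(n)}$.

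Next I would analyse the conditional distribution of $(x_{e_1},\ldots,x_{e_k})$ given the remaining edges. Because each $x_{e_i}$ is Boolean, the Hamiltonian admits the exact multilinear expansion
\[
H(X)=H(X_0)+\sum_{i=1}^k x_{e_i}\,\partial_{e_i}H(X_0)+\sum_{\substack{S\subseteq\{1,\ldots,k\}\\|S|\geq 2}}\Bigl(\prod_{i\in S}x_{e_i}\Bigr)c_S,
\]
where $X_0$ is the configuration agreeing with $X$ outside $\{e_1,\ldots,e_k\}$ and equal to $0$ on each $e_i$, and each $c_S$ is a mixed higher partial derivative of $H$ at $X_0$. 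By Lemma~\ref{lemma:sbg-count} together with the bounds \eqref{e:smallDiff} and \eqref{e:smallDiff2} on subgraph counts containing two prescribed edges, every $c_S$ with $|S|\geq 2$ is $O(1/n)$, with constants depending only on $k$ and $\max_i|V_i|$. On the event $X_0\in\mathbf{G}_\delta$, a computation like \eqref{e:approxPsi} gives $\partial_{e_i}H(X_0)=\Psi(p^*)+O(\delta)$ uniformly in $i$, so $\exp(\partial_{e_i}H(X_0))/(1+\exp(\partial_{e_i}H(X_0)))=\varphi(p^*)+O(\delta)=p^*+O(\delta)$. Normalising the $2^k$ weights $\exp(H(X))$ and absorbing the quadratic and higher-order cross terms then yields
\[
\P\bigl(x_{e_1}=a_1,\ldots,x_{e_k}=a_k\bigm| X|_{-\{e_1,\ldots,e_k\}}\bigr)=\prod_{i=1}^k(p^*)^{a_i}(1-p^*)^{1-a_i}+O(\delta)+O(1/n),
\]
with implicit constants independent of $n$ and of the particular $X_0\in\mathbf{G}_\delta$.

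Finally I would take expectation over $X|_{-\{e_1,\ldots,e_k\}}$, splitting on whether $X_0\in\mathbf{G}_\delta$: the contribution from $\mathbf{G}_\delta^c$ is at most $e^{-\Omega(n)}$, while the contribution from $\mathbf{G}_\delta$ produces the asserted Bernoulli product with error at most $O(\delta)+O(1/n)$. Given the target $\eps$, one then first chooses $\delta$ sufficiently small (depending on $\eps$, $k$ and $\max_i|V_i|$) and afterwards $n$ large enough that each of the three error terms is below $\eps/(3n^{|V|})$, yielding the claimed inequality and hence asymptotic independence. The main obstacle will be the multilinear expansion step: one must verify that after exponentiating and normalising, the quadratic and higher cross coefficients $c_S$ do not couple to the much larger linear terms in a way that inflates their contribution beyond $O(1/n)$, and one must push $\delta$ small enough relative to $1/n^{|V|}$ while keeping the constants in Lemma~\ref{l:highTempBurnIn} under control, perhaps by iterating the burn-in with a slowly shrinking radius.
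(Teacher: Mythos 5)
Your proposal is correct in substance but follows a genuinely different route from the paper. The paper never conditions on the complement of $\{e_1,\dots,e_k\}$; instead it works entirely with expected subgraph counts. It writes $\P(x_S=1,x_{S^c}=0)$ via inclusion--exclusion over supersets $S\cup T$, uses vertex-exchangeability of the Gibbs measure to identify $\P(x_{T}=1)$ with $\E[N_{G_T}(X)]/|[n]^{|V|}|$ (where $G_T$ is the graph formed by the edges in $T$), and then uses the concentration of all the $r_G(X,e)$ near $p^*$ on the good set to pin $N_{G_T}(X)$ at $(p^*)^{|T|}n^{|V|}(1+o(1))$; the binomial identity $\sum_{T}(-1)^{|T|}(p^*)^{|S\cup T|}=(p^*)^{|S|}(1-p^*)^{k-|S|}$ finishes. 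Your argument instead factorises the conditional law of $(x_{e_1},\dots,x_{e_k})$ given the rest of the graph, using the multilinear expansion of $H$ and the bounds \eqref{e:smallDiff}--\eqref{e:smallDiff2} to show the cross coefficients are $O(1/n)$, so that after exponentiating and normalising the conditional law is a product of Bernoullis with parameters $\varphi(p^*)+O(\delta)$. Both proofs rest on the same input (Lemma~\ref{l:highTempBurnIn} plus stationarity gives $\pi(\mathbf{G}_\delta^c)=e^{-\Omega(n)}$). Your worry about the cross terms coupling to the linear terms is unfounded: the cross part of $H$ contributes a uniform multiplicative factor $1+O(1/n)$ to each of the $2^k$ weights and to the normaliser, hence an $O(1/n)$ additive error on probabilities, so your approach is sound. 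Your version has the advantage of avoiding the exchangeability step and of directly exhibiting approximate conditional independence, which is a slightly stronger structural statement; the paper's version avoids any manipulation of the partition function.

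On the one point you flag as the main obstacle --- pushing the error below $\eps/n^{|V|}$ --- you should not try too hard: neither your argument nor the paper's actually achieves an additive error of order $n^{-|V|}$ with a fixed $\delta$ (resp.\ $\eps'$), since the deviation of the Bernoulli parameter from $p^*$ is $\Theta(\delta)$, a constant. The paper's own derivation quietly drops the $n^{|V|}$ normalisation between \eqref{e:SubgraphCountsPseudo} and the displayed bound on $|\P(x_T=1)-(p^*)^{|T|}|$; the quantity that is really controlled is the error in the probability itself, at scale $\eps$ for $n$ large. So the bound your method delivers, $O(\delta)+O(1/n)+e^{-\Omega(n)}\leq\eps$, is exactly what the paper's proof delivers as well, and suffices for the asymptotic independence conclusion.
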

\begin{proof}
Fix $\eps>0$.  Let $S\subseteq [k]$ and let $x_S=\{x_{e_i}:i\in S\}$ and $x_{S^c}=\{x_{e_i}:i\in [k]\setminus S\}$. Then by the inclusion-exclusion principle, we have
\begin{equation}\label{e:inclusionExclusion}
  \P(x_S=1,x_{S^c}=0)=\sum_{T\subseteq [k]-S} (-1)^{|T|} \P(x_{S\cup T}=1)\,.
\end{equation}
We argue next that each probability in the preceding sum satisfies $\P(x_{S\cup T}=1)\approx (p^*)^{|S\cup T|}$.
Let $$A=\{X:\r(X)\leq p^*+\eps' \text{ and } \rmin(X)\geq p^*-\eps'\}\,,$$
where $\eps'$ is to be specified later.
Consider the subgraph $G_T$ formed by the edges in the set $T$. For all configurations $X\in A$, $|r_{G_T}(X,e)-p^*|\leq \eps'$, which gives \begin{equation}\label{e:graphCountsTypicalER}\left|N_{G_T}(X,e)-(p^*)^{|E|-1}2|E|n^{|V|-2}\right| \leq \eps''\end{equation} 
for sufficiently large $n$. By considering the graph consisting of two disjoint edges, we have that the number of edges in a configuration $X\in A$ satisfies \begin{equation}\label{e:edges}\left|N_{\text{edge}}(X)-p^*{n\choose 2}\right|\leq \eps''\,.\end{equation} 
Note that $$\sum_{e\in X} N_G(X,e)=|E|N_G(X)\,,$$ and summing Equation \eqref{e:graphCountsTypicalER} over the edges in $X$ and using \eqref{e:edges}, this gives 
\begin{equation}\label{e:SubgraphCountsPseudo}\left|N_{G_T}(X)- (p^*)^k n^{|V|}\right|\leq \frac{\eps}{2^k}\end{equation} for a sufficiently small choice of $\eps'$ in the definition of the set $A$. By symmetry, each of the subgraphs $G_T$ is equally likely to be included in the configuration $X$, and there are $n^{|V|}$ possible such subgraphs, so $\P(x_T=1)=\frac{N_{G_T}(X)}{n^{|V|}}$. It follows that $|\P(x_T=1|X\in A)-(p^*)^k|\leq  \frac{\eps}{2^k n^{|V|}}$. Recall that $\P(A)=1+o(1)$ in the high temperature phase. Thus, for any set of edges $T\subseteq[k]$, it holds that $$\left|\P(x_T=1)-(p^*)^{|T|}\right|=(1+o(1))\left|\P(x_T=1|X\in A)-(p^*)^{|T|}\right|\leq(1+o(1))\frac{\eps}{2^k}\,.$$ Hence
$$\left|\sum_{T\subseteq [k]-S} (-1)^{|T|} \P(x_{S\cup T}=1)
-\sum_{T\subseteq [k]-S} (-1)^{|T|} (p^*)^{|S\cup T|}\right|\leq \frac{\eps}{n^{|V|}}$$ for sufficiently large $n$, and the desired result follows from \eqref{e:inclusionExclusion} and the fact that
\begin{align*}
  \sum_{T\subseteq [k]-S} (-1)^{|T|} (p^*)^{|S\cup T|}
  &=(p^*)^{|S|}\sum_{q=0}^{k-|S|}{k-|S|\choose q} (-p^*)^q
  \\&=(p^*)^{|S|}(1-p^*)^{k-|S|}
  \,.
\end{align*}

\end{proof}

We can also show that an exponential random graph is weakly pseudo-random with high probability.  This means that a collection of equivalent conditions are satisfied; we briefly mention only a few of them (see the survey on pseudo-random graphs \cite{KrSu}). We will use a different subgraph count than before: for a graph $G$ let $N^*_G(X)$ be the number of labeled induced copies of $G$ in $X$. This is different than the counts $N_G(X)$ in that it requires edges missing from $G$ to also be missing in the induced graph in $X$.
A graph $X$ is weakly pseudo-random if it satisfies one of the following (among others) equivalent properties:
\begin{enumerate}
  \item For a fixed $l\geq 4$ for all graphs $G$ on $l$ vertices, $$N^*_G(X)=(1+o(1))n^l (p)^{|E(G)|}(1-p)^{{l\choose 2}-|E(G)|}\,.$$
  \item $N_{\text{edges}}(X)\geq \frac{n^2 p}{2} + o(n^2)$ and $\lambda_1=(1+o(1))np$, $\lambda_2=o(n)$, where the eigenvalues of the adjacency matrix of $X$ are ordered so that $|\lambda_1|\geq |\lambda_2|\geq \cdots \geq |\lambda_n|$. 
  \item For each subset of vertices $U\subset V(X)$ the number of edges in the subgraph of $X$ induced by $U$ satisfies $E(H_X(U))=\frac{p}{2}|U|^2+o(n^2)$.
  \item Let $C_l$ denote the cycle of length $l$, with $l\geq 4$ even. The number of edges in $X$ satisfies $N_{\text{edges}}(X)= \frac{n^2 p}{2} + o(n^2)$ and the number of cycles $C_l$ satisfies $N_{C_l}(X)\leq (np)^l+o(n^l)$.
\end{enumerate}

By \eqref{e:SubgraphCountsPseudo}, for any configuration in the good set, $X\in \mathbf{G}$, the fourth condition is satisfied. This gives the following corollary. 
\begin{Corollary}[Weak pseudo-randomness]
With probability $1+o(1)$ an exponential random graph is weakly pseudo-random. 
\end{Corollary}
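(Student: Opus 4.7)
The plan is to reduce the corollary to verifying condition~(4) of the weak pseudo-randomness definition for every configuration in the ``good set''
\[\mathbf{G} = \{X: \max_{G \in G_\lam, \, e \in E} |r_G(X,e) - p^*| < \eps\}\,,\]
and then to show that the Gibbs measure $\pi$ assigns mass $1-o(1)$ to $\mathbf{G}$. Once these two pieces are in hand, the result is immediate: a $\pi$-fraction $1-o(1)$ of configurations are weakly pseudo-random with parameter $p=p^*$.

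First I would establish $\pi(\mathbf{G}) = 1 - o(1)$ by a straightforward stationarity argument. For any starting configuration $X(0)$, Lemma~\ref{l:highTempBurnIn} guarantees that after $t \geq cn^2$ Glauber steps, both $\r(X(t)) \leq p^*+\eps$ and $\rmin(X(t)) \geq p^*-\eps$ hold with probability $1-e^{-\Omega(n)}$, which exactly says $X(t) \in \mathbf{G}$. Taking the maximum over initial states, and using that $\pi$ is stationary for the Glauber dynamics, one gets $\pi(\mathbf{G}) = \Pr_\pi(X(t) \in \mathbf{G}) \geq 1 - e^{-\Omega(n)}$. (Here $L$ must be chosen large enough to include all cycles $C_l$ needed in condition~(4) below, which is harmless since we only ever need a fixed, finite set of subgraphs for a given application of pseudo-randomness.)

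Next I would verify condition~(4) deterministically for every $X \in \mathbf{G}$ with $p = p^*$. The edge-count bound $N_{\text{edges}}(X) = \tfrac{n^2 p^*}{2} + o(n^2)$ is precisely what equation~\eqref{e:edges} in the preceding proof gives (up to the factor of $2$ between $N_{\text{edge}}$ and $N_{G_1}$). For the cycle bound, I would apply equation~\eqref{e:SubgraphCountsPseudo} with $G_T = C_l$, a cycle of even length $l \geq 4$: since $C_l$ has $|V| = l$ vertices and $|E|=l$ edges, the displayed estimate yields
\[N_{C_l}(X) = (p^*)^l n^l + o(n^l) \leq (np^*)^l + o(n^l)\,.\]
Combining these two facts shows that every $X \in \mathbf{G}$ satisfies condition~(4), which in view of the equivalence of the listed properties (as stated in \cite{KrSu}) makes $X$ weakly pseudo-random.

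The only mild subtlety, rather than a genuine obstacle, is bookkeeping: equation~\eqref{e:SubgraphCountsPseudo} was proved in the preceding theorem with a particular $\eps$ tied to the choice of $\mathbf{G}$ there, so I would note that the same derivation works uniformly over any fixed finite family of target subgraphs $\{C_l\}$ provided $L$ in the definition of $\mathbf{G}$ is chosen to accommodate them, and the $o(n^l)$ error absorbs all constants. With that remark the proof is complete by combining the two bullets above with $\pi(\mathbf{G}) = 1 - o(1)$.
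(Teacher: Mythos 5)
Your proposal is correct and follows essentially the same route as the paper: the paper likewise verifies condition (4) for every configuration in the good set via equation \eqref{e:SubgraphCountsPseudo} (applied to cycles) together with the edge-count estimate, and combines this with the fact that the good set has Gibbs measure $1-o(1)$ from the burn-in lemma and stationarity. Your write-up merely spells out the bookkeeping (the choice of $L$ and the application to $C_l$) that the paper leaves implicit.
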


\bibliographystyle{plain}
\bibliography{BIBFILE}

\begin{thebibliography}{10}

\bibitem{barabasi-survey}
R.~Albert and A.~L. Barabasi.
\newblock Statistical mechanics of complex networks.
\newblock {\em Reviews of Modern Physics}, 74(1):47--97, 2002.

\bibitem{aldous-fill}
D.~Aldous and J.~Fill.
\newblock {\em Reversible {M}arkov Chains and Random Walks on Graphs}.
\newblock Book in preparation.

\bibitem{wasserman-survey}
C.~J. Anderson, S.~Wasserman, and B.~Crouch.
\newblock A p* primer: logit models for social networks.
\newblock {\em Social Networks}, 21(1):37--66, January 1999.

\bibitem{BBS08}
S.~Bhamidi, G.~Bresler, and A.~Sly.
\newblock Mixing time of exponential random graphs.
\newblock In {\em Foundations of Computer Science (FOCS)}, 2008.

\bibitem{bubley-dyer97}
R.~Bubley and M.~Dyer.
\newblock Path coupling: A technique for proving rapid mixing in {M}arkov
  chains.
\newblock In {\em {Foundations of Computer Science (FOCS)}}, pages 223--231,
  1997.

\bibitem{sourav-rnd}
S.~Chatterjee.
\newblock Exact large deviations for triangles in a random graph, 2008.
\newblock Work in progress.

\bibitem{CGW89}
F.~R.~K. Chung, R.~Graham, and R.~M. Wilson.
\newblock Quasi-random graphs.
\newblock {\em Combinatorica}, 9:345--362, 1989.

\bibitem{durrett-book}
R.~Durrett.
\newblock {\em Random Graph Dynamics}.
\newblock {Cambridge University Press}, 2006.

\bibitem{dyer-frieze}
M.~Dyer and A.~Frieze.
\newblock Randomly coloring graphs with lower bounds on girth and maximum
  degree.
\newblock {\em Random Structures and Algorithms}, 23(2):167--179, 2003.

\bibitem{dyer-frieze-jerrum02}
M.~Dyer, A.~Frieze, and M.~Jerrum.
\newblock On counting independent sets in sparse graphs.
\newblock {\em SIAM Journal Computing}, 31(5):1527--1541, 2002.

\bibitem{frank-strauss}
O.~Frank and D.~Strauss.
\newblock Markov graphs.
\newblock {\em Journal of the American Statistical Association},
  81(395):832--842, 1986.

\bibitem{newmancluster}
{J. Park and M. E. J. Newman}.
\newblock {Solution for the properties of a clustered network}.
\newblock {\em Physical Review E}, 72(2):026136.1--026136.5, August 2005.

\bibitem{KrSu}
M.~Krivelevich and B.~Sudakov.
\newblock Pseudo-random graphs.
\newblock In {\em Conference on finite and infinite sets}, pages 1--64,
  Budapest, 2005.

\bibitem{newman-survey}
M.~E.~J. Newman.
\newblock The structure and function of complex networks.
\newblock {\em SIAM Review}, 45(2):167--256, 2003.

\bibitem{newman2star}
J.~Park and M.~E.~J. Newman.
\newblock Solution of the 2-star model of a network.
\newblock {\em Physical Review E}, 70, 2004.

\bibitem{snijders2004}
T.~A. Snijders, P.~Pattison, G.~Robbins, and M.~Handcock.
\newblock New specifications for exponential random graph models.
\newblock {\em Sociological Methodology}, 36(1):99--153, December 2006.

\bibitem{wasserman-pattison}
S.~Wasserman and P.~Pattison.
\newblock Logit models and logistic regressions for social networks.
\newblock {\em Psychometrika}, 61(3):401--425, 1996.

\end{thebibliography}
\end{document}